\title{Dispersive shocks in Quantum Hydrodynamics with viscosity}
\author{Corrado Lattanzio, Pierangelo Marcati, Delyan Zhelyazov}
\address[Corrado Lattanzio]{DISIM, Department of Information Engineering, Computer Science and Mathematics \\ University of L'Aquila, Italy}
\email{corrado@univaq.it}
\address[Pierangelo Marcati]{GSSI, Gran Sasso Science Institute, L'Aquila, Italy}
\email{pierangelo.marcati@gssi.it}
\address[Delyan Zhelyazov]{GSSI, Gran Sasso Science Institute, L'Aquila, Italy \newline and DISIM, Department of Information Engineering, Computer Science and Mathematics \\ University of L'Aquila, Italy}\email{delyan.zhelyazov@gssi.it}
\newtheorem{lemma}{Lemma}
\theoremstyle{definition}
\newtheorem{remark}{Remark}
\theoremstyle{plain}
\newtheorem{corollary}{Corollary}
\begin{document}

\begin{abstract}
In this paper we study existence and stability of shock profiles for  a 1-D compressible Euler system in the context of Quantum Hydrodynamic models. The dispersive term is originated by the quantum effects described through the Bohm potential; moreover we introduce a  (linear) viscosity to analyze its interplay with the former while proving existence, monotonicity and stability of travelling waves connecting a Lax shock for the underlying Euler system.
The existence of monotone profiles  is proved  for sufficiently small shocks;  while the case of large shocks leads to  the (global) existence for an oscillatory profile, where dispersion plays a significant role. 
The spectral analysis of the linearized problem about a profile is also provided. In particular, we derive a sufficient condition for the stability of the essential spectrum and we estimate the maximum modulus of the eigenvalues in the unstable plane, using a careful analysis of the Evans function.
\end{abstract}

\maketitle

\section{introduction}
The aim of this paper is to study how dissipation interacts with dispersion in terms of existence and stability of traveling wave profiles, or dispersive shocks. We consider Quantum Hydrodynamics with a linear viscosity term
\begin{equation}
\label{eq_sys}
\left\{
\begin{array}{ll}
\rho_t+m_x=0,\\
m_t+\Big{(}\frac{m^2}{\rho}+\rho^\gamma\Big{)}_x=\epsilon \mu m_{xx}+\epsilon^2 k^2 \rho \Big(\frac{(\sqrt{\rho})_{xx}}{\sqrt{\rho}}\Big{)}_x,
\end{array}
\right.
\end{equation}
where $\rho=\rho(t,x)>0$, $m=m(t,x)$, $\gamma \geq 1$, $0<\epsilon\ll1$, $\mu>0$, $k>0$. Here $\epsilon \mu$ and $\epsilon^2 k^2$ are the viscosity and dispersive coefficients, respectively. Moreover $\rho^\gamma$ is the pressure and we consider $t\geq0$ and $x \in \mathbb{R}$. The  dispersive term is due to the Bohm potential and the resulting system is used for instance in superfluidity or to model semiconductor devices. 

The first studies concerning dispersive terms can be found
by \cite{Sagdeev, Gurevich}; see also \cite{Zak,Gurevich1,Nov,Hoefer}, and  \cite{Hoefer1} (and the reference therein) for a quite complete analysis via the Whitham modulation theory.
The first attempt to analyze the spectral theory of the linearized operator around dispersive shocks has been discussed in  \cite{Humpherys} regarding the case of  $p$-system with real viscosity and linear capillarity, while the mathematical theory  for  the 
Quantum Hydrodynamics   can be found in \cite{AM1, AM2, AMtf, AMDCDS,AS,Michele1,Michele, DM, DM1, DFM, GLT}.
In the present paper, we study in particular the effects of the combination of dispersion and dissipation effects in terms of existence and stability of profiles for such models. In addition, we are able to discuss  the spectrum of the linearized operator also  in the case of \emph{non monotone} shocks for our model in Eulerian coordinates; we also underline here that   a more detailed numerical description of the behavior of the Evans function close to the zero eigenvalue is presented in the companion paper \cite{LMZ18_num}.

 We first  present a \emph{local} result, concerning the study of profiles for sufficiently small shocks for the underlying Euler system, where both viscosity and capillarity terms are neglected. The existence result is proved by means of  a bifurcation argument, where the bifurcation parameter is the difference between the end states of the shock. After this quite standard result, we focus on our main interest, namely the existence and stability of profiles for \emph{large} shocks, showing in particular the combined effect of dissipation (coming from the viscosity term) and dispersion (coming from the capillarity term).
 Even if our result will require the viscosity coefficient to be ``dominant'', our results include cases when the  dispersion plays a significant role, giving rise to the existence of oscillatory profiles. This result is achieved by showing that the dynamical system solved by the profile possesses an invariant region if the dissipation is sufficiently large.

Then we focus on the study of stability properties of such profiles, also taking advantage of the numerical tests described in full details in the companion paper \cite{LMZ18_num}.
We start by investigating the spectrum of the linearized system about a profile: we analyze the essential spectrum and show that it is stable for subsonic or sonic end states. Moreover we give a bound for the real parts of the eigenvalues, by using an energy estimate, and exclude the presence of eigenvalues with non--negative real part for  $|\lambda|$ sufficiently big. This last result is not obtained only via an asymptotic result as $|\lambda|\to +\infty$, but rather we give an explicit estimate for the constant  which bounds from above the modulus of possible eigenvalues. This last more ``quantitative'' result is  fundamental to explicitly localize the region of the unstable half plane where eigenvalues may lie and then to numerically analyze such region, giving numerical evidence of spectral stability; see \cite{LMZ18_num} for further details.

The paper is organized as follows. In the Section \ref{profile_equation} we recall basic facts on the underlying Euler system to then derive the second order equation satisfied by a traveling wave profile for  \ref{eq_sys}. In Section \ref{existence_of_profiles} we present the (local and global) existence results for the profile. Finally, the   last section is devoted to the study of the linearized system about the profile and to present the results about spectral stability. 

\section{Profile equation} \label{profile_equation}
Let us start by recalling some well known facts  concerning the Euler system
\begin{equation*}
\left\{
\begin{array}{ll}
\rho_t+m_x=0,\\
m_t+\Big{(}\frac{m^2}{\rho}+\rho^\gamma\Big{)}_x=0
\end{array}
\right.
\end{equation*}
which will be used later for the analysis of the Quantum Hydrodynamics system.
The flow velocity is denoted by $u=m/\rho$. Let $U=(\rho,m)$. The eigenvalues of the Jacobian (characteristic speeds) are 
\begin{equation*}
\lambda_1(U)=u-c_s(\rho),\mbox{ }\lambda_2(U)=u+c_s(\rho),
\end{equation*}
where
\begin{equation*}
c_s(\rho)=\sqrt{\gamma \rho^{\gamma-1}}=\sqrt{\frac{d (\rho^\gamma)}{d\rho}}
\end{equation*}
denotes the sound speed. A shock wave with end states $\rho^\pm$ and $m^\pm$ and shock speed $s$ satisfies the Rankine-Hugoniot conditions:
\begin{eqnarray}
\label{Rankine_Hugoniot}
m^+-m^-=s(\rho^+-\rho^-),\\
\label{Rankine_Hugoniot2}
\Big{(}\frac{m^2}{\rho}+\rho^{\gamma}\Big{)}^+-\Big{(}\frac{m^2}{\rho}+\rho^{\gamma}\Big{)}^-=s(m^+-m^-).
\end{eqnarray}
A k-shock satisfies the Lax entropy condition:
\begin{equation*}
\lambda_k(U^+)<s<\lambda_k(U^-).
\end{equation*}

Let us consider a traveling wave profile
\begin{equation*}
\rho=P\Big{(}\frac{x-s t}{\epsilon}\Big{)}\mbox{, }m=J\Big{(}\frac{x-s t}{\epsilon}\Big{)}\mbox{. }
\end{equation*}
As customary,  the speed  $s \in \mathbb{R}$  of the travelling wave and its limiting end states 
\begin{equation*}
\lim_{y\rightarrow \pm \infty}P(y)=P^{\pm}\  \hbox{and}\ \lim_{y\rightarrow \pm \infty}J(y)=J^{\pm}
\end{equation*}
satisfy the Rankine--Hugoniot conditions \eqref{Rankine_Hugoniot}-\eqref{Rankine_Hugoniot2}.\\
We rewrite the Bohm potential in   conservative form
\begin{equation*}
\rho \Big(\frac{(\sqrt{\rho})_{xx}}{\sqrt{\rho}}\Big{)}_x=\frac{1}{2}\Big{(}\rho (\ln \rho)_{xx}\Big{)}_x\mbox{. }
\end{equation*}
After substituting the profiles $P$ and $J$ in the system \eqref{eq_sys} and multiplying by $\epsilon$ we obtain
\begin{align}
- s P'+J'&=0,\label{preq1}\\
-s J' + \Big{(}\frac{J^2}{P}+P^\gamma\Big{)}'&=\mu J''+\frac{k^2}{2}(P(\ln P)'')'\mbox{, }
\label{preq2}
\end{align}
where $'$ denotes $d/dy$ and $P=P(y)$, $J=J(y)$. 
Integrating equation \eqref{preq1}, we get 
\begin{equation*}
J(y)-sP(y)=J^--sP^-
\end{equation*}
We can also integrate \eqref{preq1} from $y$ to $+\infty$ to get
\begin{equation*}
J(y)-sP(y)=J^+-sP^+
\end{equation*}
So we obtain 
\begin{equation}
\label{equation_j}
J(y)=sP(y)-A,
\end{equation}
 with
\begin{equation*}
A=s P^{\pm}-J^{\pm},
\end{equation*}
as follows from the Rankine-Hugoniot condition \eqref{Rankine_Hugoniot}.\\
Substituting the expression for $J(y)$ into equation \eqref{preq2} and integrating we get
\begin{eqnarray*}
\int_{-\infty}^y\bigg{(} -s(sP(x)-A)' + \Big{(}\frac{(sP(x)-A)^2}{P(x)}+P(x)^\gamma\Big{)}' \bigg{)}dx \\
 = \int_{-\infty}^y \bigg{(}\mu sP(x)''+\frac{k^2}{2}(P(x)(\ln P(x))'')' \bigg{)}dx.
\end{eqnarray*}
We can also integrate from $y$ to $+\infty$. We obtain the planar ODE
\begin{equation}\label{2Dsys}
P''=\frac{2}{k^2} f(P) - \frac{2 s \mu}{k^2} P' + \frac{P'^2}{P},
\end{equation}
where
\begin{align*}
f(P)&=-s(s P-A)+\frac{(sP-A)^2}{P}+P^{\gamma}-B\nonumber\\
&=P^{\gamma}-(As+B)+\frac{A^2}{P}.
\end{align*}
Here the constant $B$ is given by
\begin{equation*}
B=-s J^{\pm}+\Big{(}\frac{J^2}{P}+P^{\gamma}\Big{)}^{\pm}.
\end{equation*}
Let $P'=Q$, we get
\begin{align}
\label{sys_ODE1}
P'=Q&=f_1,\\
Q'=\frac{2 f(P)}{k^2}-\frac{2\mu s}{k^2} Q+\frac{Q^2}{P}&=f_2.\label{sys2}
\end{align}
The constants $A,B$ in $f(P)$ can be expressed in terms of $P^{\pm}$:
\begin{align}
\label{f_roots0}
f(P)&= P^\gamma +\frac{P^-P^+}{P}\frac{(P^+)^\gamma-(P^-)^\gamma}{P^+-P^-}\nonumber\\
&\ -\frac{(P^+)^{\gamma+1}-(P^-)^{\gamma+1}}{P^+-P^-}.
\end{align}
We have
\begin{equation*}
f''(P)=\gamma (\gamma-1)P^{\gamma-2}+\frac{2 P^- P^+}{P^3}\frac{(P^+)^\gamma-(P^-)^\gamma}{P^+-P^-},
\end{equation*}
so $f''(P)>0$ as a sum of a non-negative and a positive term.\\
The function $f(P)$ has two zeros $P^{\pm}$. The system \eqref{sys_ODE1}-\eqref{sys2} has two equilibria $[P^-,0]$ and $[P^+,0]$. \\
Suppose $P^+<P^-$ and $s>0$. The jacobian evaluated at the equilibria is
\begin{equation*}
M =  \begin{bmatrix}
    0 & 1 \\
 \frac{2 f'(P^{\pm})}{k^2} & -\frac{2 \mu s}{k^2} 
  \end{bmatrix}.
\end{equation*}
Since $f''(P)>0$, we have $f'(P^+)<0$ and $f'(P^-)>0$.\\
The eigenvalues of $M$ at $P^+$ are
\begin{equation*}
\lambda_{1,2}(P^+)=\frac{-s \mu \pm \sqrt{2 k^2 f'(P^+) + s^2 \mu^2}}{k^2}.
\end{equation*}
Since $f'(P^+)<0$ we have either $2k^2f'(P^+)+s^2\mu^2<0$ or $2k^2f'(P^+)+s^2\mu^2 \geq 0$ and $\sqrt{2k^2f'(P^+)+s^2 \mu^2}<s \mu$. Hence $\Re \lambda_1(P^+), \Re \lambda_2(P^+) <0$. If $2k^2f'(P^+)+s^2\mu^2<0$ the eigenvalues have nonzero imaginary parts.\\
The eigenvalues of $M$ at $P^-$ are
\begin{equation*}
\lambda_{1,2}(P^-)=\frac{-s \mu \pm \sqrt{2 k^2 f'(P^-) + s^2 \mu^2}}{k^2}.
\end{equation*}
At $P^-$ we have $f'(P^-)>0$, so $\sqrt{2 k^2 f'(P^-)  + s^2 \mu^2}> s \mu$. Therefore $\lambda_1(P^-)>0$ and $\lambda_2(P^-)<0$.
\section{Existence of shock profiles}\label{existence_of_profiles}
Here we prove existence of shock wave profiles for the system \eqref{eq_sys}, connecting the end states $P^{\pm}$ and $J^{\pm}$, which satisfy the Rankine-Hugoniot conditions \eqref{Rankine_Hugoniot}-\eqref{Rankine_Hugoniot2}. We consider both existence of sufficiently small shocks, and possibly oscillatory profiles for large shocks, where the dispersion plays a significant role. The ratio $\mu/k$ controls how oscillatory the shocks are.

\subsection{Local existence of profiles}\label{subsec1}
In this section we consider local existence of profiles using a bifurcation theory argument about the variable $P^+ -P^- =h$. In particular, proving  a transcritical bifurcation at $h=0$, we obtain existence of profiles for small shocks without  restrictions on the coefficients $\mu$ and $k$.

We start by rewriting $f$ in \eqref{f_roots0} in the variables $u=P-P^-$ and $P^+ -P^- =h$ as follows:
\begin{align*}
\tilde{f}(u,h)&=(P^-+u)^\gamma +\frac{P^-(P^-+h)}{P^-+u}\frac{(P^-+h)^\gamma-(P^-)^\gamma}{h}\nonumber\\
&\ -\frac{(P^-+h)^{\gamma+1}-(P^-)^{\gamma+1}}{h}.
\end{align*}
Let $u'=v$. The system \eqref{sys_ODE1}-\eqref{sys2} becomes
\begin{eqnarray}
\label{sys_ODE1a}
u'=v=\tilde{f}_1(u,v,h),\\
\label{sys_ODE1a2}
v'=\frac{2}{k^2}\tilde{f}(u,h)-\frac{2 s \mu}{k^2}v+\frac{v^2}{P^-+u}=\tilde{f}_2(u,v,h).
\end{eqnarray}
In the following lemma we transform the system \eqref{sys_ODE1}-\eqref{sys2} to a normal form, which is given by  two scalar decoupled equations, for which we can prove the existence of a heteroclinic connection.
\begin{lemma}\label{lemma_loc_existence}
For any $P^->0$ there is an $\epsilon>0$, such that if $s>0$ ($s<0$), $|P^+-P^-|<\epsilon$ and $P^+<P^-$ ($P^-<P^+$), there exists a heteroclinic for the system \eqref{sys_ODE1}-\eqref{sys2}, connecting $[P^-,0]$ to $[P^+,0]$.
\end{lemma}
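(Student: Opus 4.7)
The natural approach, as the lemma's phrase ``normal form given by two scalar decoupled equations'' suggests, is a parameter-dependent center-manifold reduction with bifurcation parameter $h = P^+ - P^-$. In the variables of \eqref{sys_ODE1a}--\eqref{sys_ODE1a2} the equilibria $(0,0)$ and $(h,0)$ merge at $h = 0$ into a degenerate equilibrium with one zero eigenvalue, and the idea is to decouple the fast stable direction so that existence of the connection reduces to a transparent question about a scalar equation in $u$ parametrized by $h$. The first structural observation is that $u=0$ and $u=h$ correspond to $P=P^\pm$ and therefore must be zeros of $\tilde f$, that is $\tilde f(0,h)\equiv 0$ and $\tilde f(h,h)\equiv 0$; accordingly $\tilde f$ factors smoothly as
\[
\tilde f(u,h)=u(u-h)\,\psi(u,h),
\]
and a short direct computation yields $\psi(0,0)=\tfrac12\gamma(\gamma+1)(P^-)^{\gamma-2}>0$. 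At $(u,v,h)=(0,0,0)$ the Jacobian of \eqref{sys_ODE1a}--\eqref{sys_ODE1a2} therefore has eigenvalues $0$ and $-2s\mu/k^2\neq 0$, and after appending $h'=0$ one obtains a two-dimensional center manifold tangent to the $(u,h)$-plane.

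Next I would invoke a parameter-dependent Shoshitaishvili-type normal-form theorem to smoothly conjugate the extended system, near the degenerate equilibrium, to the decoupled pair
\begin{align*}
u' &= F(u,h),\\
v' &= -\tfrac{2s\mu}{k^2}\,v,
\end{align*}
where $F$ is the reduced vector field on the center manifold. A one-step expansion of the invariance equation $V_u V = \tfrac{2}{k^2}\tilde f - \tfrac{2s\mu}{k^2}V + V^2/(P^-+u)$ for $v=V(u,h)$ gives
\[
F(u,h) = \frac{1}{s\mu}\,\tilde f(u,h) + O\bigl((|u|+|h|)^3\bigr) = \frac{\gamma(\gamma+1)(P^-)^{\gamma-2}}{2\,s\mu}\,u(u-h) + O\bigl((|u|+|h|)^3\bigr),
\]
so for $|h|$ small $F(\cdot,h)$ still has exactly the simple zeros $u=0$ and $u=h$.

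The hypotheses of the lemma force $sh<0$ in both declared cases, so the linearizations of the reduced scalar flow at $u=0$ and $u=h$ are positive and negative respectively; hence $u=0$ is unstable, $u=h$ is stable, and a monotone heteroclinic from $0$ to $h$ exists automatically on the center manifold. Lifting it through the conjugacy produces the claimed connecting orbit of \eqref{sys_ODE1}--\eqref{sys2} from $[P^-,0]$ to $[P^+,0]$. The main obstacle is technical rather than conceptual: one has to guarantee the smooth $h$-dependence of the center manifold and of the Shoshitaishvili conjugation, and confirm that the leading coefficient of $F$ is nonzero so that the transcritical structure at $h=0$ persists for $h\neq 0$ with the correct orientation; once the scalar reduction is in hand the connection itself is essentially immediate because the reduced phase space is one dimensional and contains exactly the two equilibria of interest.
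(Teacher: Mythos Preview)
Your proposal is correct and follows essentially the same route as the paper: a parameter-dependent center-manifold reduction at $h=0$ that uncovers the transcritical normal form, from which the scalar heteroclinic and hence the full connection follow. The only differences are cosmetic---you factor $\tilde f(u,h)=u(u-h)\psi(u,h)$ directly and treat both signs of $s$ at once via $sh<0$, whereas the paper changes to the eigenbasis, computes the normal-form coefficients $a=-b$ by hand, invokes Kuznetsov's Theorem~5.4 for the topological equivalence, and handles $s>0$ and $s<0$ separately; note also that Shoshitaishvili gives only a $C^0$ conjugacy (not smooth), but since the heteroclinic already lives on the invariant center manifold this does not affect the argument.
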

\begin{proof}
Suppose $h< 0$, $s>0$. Let $A(u,0,h)= D(\tilde{f}_1,\tilde{f}_2)/D(u,v)\vert_{(v=0)}$ be the jacobian evaluated for $v=0$,  $h< 0$, and any $u$, that is
\begin{equation*}
A(u,0,h)=  \begin{bmatrix}
    0 & 1 \\
\frac{2}{k^2} \frac{\partial \tilde{f}(u,h)}{\partial u} & -\frac{2 \mu s}{k^2} 
  \end{bmatrix}.
\end{equation*}
Then, system \eqref{sys_ODE1a}--\eqref{sys_ODE1a2} admits two equilibria $[0,0]$ and $[h,0]$, corresponding to $[P^-,0]$ and $[P^+,0]$.
The calculation in the end of Section \ref{profile_equation} shows that $A(0,0,h)$ has two eigenvalues with negative real parts, and $A(h,0,h)$ has a positive and a negative eigenvalue. 
Therefore, the equilibrium $[0,0]$ is a saddle and $[h,0]$ is stable.

Now, in order to study this system close to the bifurcation value $h=0$, 
we  extend $\tilde{f}$ and its derivatives by continuity:
\begin{align}\label{eq:firstline}
\tilde{f}(0,0)&=0,\mbox{ }\frac{\partial \tilde{f}(0,0)}{\partial u}=0,\mbox{ }\frac{\partial \tilde{f}(0,0)}{\partial h}=0,\\
\nonumber
\frac{\partial^2 \tilde{f}(0,0)}{\partial u \partial h}&=-\frac{\gamma(\gamma+1)}{2}(P^{-})^{\gamma-2}<0,\\
\nonumber
\frac{\partial^2 \tilde{f}(0,0)}{\partial u^2}&=\gamma(\gamma+1)(P^{-})^{\gamma-2}>0.
\end{align}
As a consequence, we get
\begin{equation*}
A=A(0,0,0)=  \begin{bmatrix}
    0 & 1 \\
    0 & -\frac{2 \mu s}{k^2}
  \end{bmatrix},
  \end{equation*}
and clearly its eigenvalues   are $\lambda_1=0$, $\lambda_2 = -2 s \mu /(k^2)$.
Let $K$ be the matrix of column eigenvectors of $A$, namely
\begin{equation*}
K=  \begin{bmatrix}
    1 & -\frac{k^2}{2 \mu s} \\
    0 & 1
  \end{bmatrix}.
  \end{equation*}
 We change the variables $(u,v)=K(w_1,w_2)$, that is we transform the original system according to the eigenbasis at $[0,0,0]$:
\begin{eqnarray}
\label{2D_system_eigenbasis}
\begin{bmatrix}
w_1\\
w_2
\end{bmatrix}'=
\begin{bmatrix}
0 & 0\\
0 & -\frac{2 s \mu}{k^2}
\end{bmatrix}
\begin{bmatrix}
w_1\\
w_2
\end{bmatrix}
+
\begin{bmatrix}
\frac{k^2}{2 s \mu} g\\
g
\end{bmatrix},
\end{eqnarray}
where
\begin{eqnarray*}
\nonumber
g=g\big{(}w_1-\frac{k^2}{2 s \mu}w_2, w_2,h\big{)},\\
g(u,v,h)=\frac{2}{k^2}\tilde{f}(u,h)+\frac{v^2}{P^-+u}.
\end{eqnarray*}
In view of \eqref{eq:firstline}, we have
\begin{equation*}
g(0,0,0)=0,\mbox{ }\partial_u g(0,0,0)=0,\mbox{ }\partial_v g(0,0,0)=0,
\end{equation*}
so that \eqref{2D_system_eigenbasis} is of the form of a linear part and a perturbation.\\
There is a center manifold $w_2=\psi(w_1,h)$, which satisfies the tangency conditions:
\begin{eqnarray}
\nonumber
\psi(0,0)=0,\mbox{ }\frac{\partial\psi(0,0)}{\partial w_1}=0,\mbox{ }\frac{\partial\psi(0,0)}{\partial h}=0.
\end{eqnarray}
We perform center manifold reduction:
\begin{equation}\label{eq:centerred}
w_1'=\zeta(w_1,h),
\end{equation}
where the function $\zeta(w_1,h)$ is given by the following expression:
\begin{equation*}
\zeta(w_1,h): =\frac{k^2}{2 s \mu} g\big{(}w_1-\frac{k^2}{2 s \mu}\psi(w_1,h),\psi(w_1,h),h\big{)}.
\end{equation*}
Then we have
\begin{align*}
\nonumber
\zeta(0,0)&=\frac{k^2}{2 s \mu} g(0,0,0)=0,\mbox{ }\frac{\partial \zeta(0,0)}{\partial w_1}=0,\\
\frac{\partial \zeta(0,0)}{\partial h}&=0,\mbox{ } a=\frac{\partial^2 \zeta(0,0)}{\partial h \partial w_1},
\mbox{ }2b=\frac{\partial^2 \zeta(0,0)}{\partial w_1^2}.
\end{align*}
We get the normal form of a transcritical bifurcation
\begin{equation}
\label{nf_transcritical}
w_1' = a h w_1 + b w_1^2,
\end{equation}
where
\begin{equation*}
a =-\frac{\gamma(\gamma+1)}{2 s \mu} (P^-)^{\gamma-2},\mbox{ }
b= -a.
\end{equation*}
Indeed, Theorem 5.4 from \cite{Kuznetsov}, p.\ 159 implies that the system \eqref{sys_ODE1a}-\eqref{sys_ODE1a2} is locally topologically equivalent to the scalar nonlinear ODE \eqref{eq:centerred} for $w_1$ augmented with the linear equation $w_2'=sgn(\lambda_2) w_2 = - w_2$.
The nondegenerancy condition $a,b\neq 0$ is satisfied, so the parameter $h$ unfolds the bifurcation with normal form given in \eqref{nf_transcritical}. Since $\lambda_2<0$, the center manifold is attracting. We have $a<0$, $b>0$, so the trivial equilibrium $w_1=0$ is unstable and the nontrivial equilibrium $h<0$ is stable for \eqref{nf_transcritical}.
Correspondingly, \eqref{eq:centerred} has the unstable (trivial) equilibrium $w_1=0$, a stable (nontrivial) equilibrium $w_1^0(h)$ and, as a consequence, 
there exists a heteroclinic connecting $w_1$ to $w_1^0(h)$. The homeomorphism preserves the number of eigenvalues with positive (negative) real parts of the equilibria. The decoupled system with first equation \eqref{eq:centerred} has a trivial equilibrium $[0,0]$ with one positive and one negative eigenvalue, and a non-trivial equilibrium $[w_1^0(h),0]$ with two negative eigenvalues. So the equilibrium $[0,0]$ is mapped to $[P^-,0]$ and  $[w_1^0(h),0]$ is mapped to $[P^+,0]$. The heteroclinic, connecting $[0,0]$ to $[w_1^0(h),0]$, corresponds to a heteroclinic, connecting $[P^-,0]$ to $[P^+,0]$.\\
Now let $s<0$. We have $\lambda_2>0$, so the center manifold is not attracting. We have $a>0$ and $b<0$. Let $P^+>P^-$ (that is $h>0$). The equilibrium $w_1=0$ is unstable, and $w_1^0(h)=h>0$ is stable. The decoupled system has a heteroclinic, connecting $[0,0]$ to $[w_1^0(h),0]$. The equilibrium $[0,0]$ has two positive eigenvalues, and it is  mapped to $[P^-,0]$. The equilibrium $[w_1^0(h),0]$ has one positive and one negative eigenvalue, and it is mapped to $[P^+,0]$. The heteroclinic of the decoupled system corresponds to a heteroclinic, connecting $[P^-,0]$ to $[P^+,0]$.
\end{proof}
The heteroclinic orbit for $P$ constructed  in Lemma \ref{lemma_loc_existence} gives a traveling wave profile for our model, by using $J(y)$ from  equation \eqref{equation_j}.
The following corollary phrases this existence result of profiles in terms of Rankine-Hugoniot  and   Lax entropy conditions of end states, as well as super- or sub-sonic conditions.
\begin{corollary}
For any $P^->0$, there is an $\epsilon>0$, such that if $P^+>0$, $|P^+-P^-|<\epsilon$, and the end states $P^{\pm}$, $J^{\pm}$ and the speed $s$ satisfy the Lax condition for a 2-shock  with supersonic right state and $u^+>c_s(P^+)$ or a 1-shock with a subsonic left state, then there is a traveling wave profile.
\end{corollary}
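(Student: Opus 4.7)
The plan is to reduce the corollary to Lemma \ref{lemma_loc_existence} by verifying that the Lax-and-sonic hypotheses on $(P^\pm,J^\pm,s)$ imply the sign conditions on $s$ and on $h=P^+-P^-$ required by that lemma, and then to read off $J(y)$ from the algebraic relation \eqref{equation_j}. So there are really only three things to check: the sign of $h$, the sign of $s$, and the reconstruction of $J$.

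First I would handle the 2-shock case. The Lax entropy condition reads $\lambda_2(U^+)<s<\lambda_2(U^-)$, i.e.\ $u^++c_s(P^+)<s<u^-+c_s(P^-)$. Combining this with the supersonic hypothesis $u^+>c_s(P^+)$ gives $s>u^++c_s(P^+)>2c_s(P^+)>0$, which yields the sign of the speed. For the sign of $h$ I would use the Rankine--Hugoniot relations \eqref{Rankine_Hugoniot}--\eqref{Rankine_Hugoniot2}: eliminating $m^\pm$ through $J^\pm=sP^\pm-A$ one obtains a single relation among $P^\pm$, $s$ and $\gamma$ whose analysis (standard for isentropic Euler) shows that a Lax 2-shock is compressive, namely $P^+<P^-$. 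This is the classical monotonicity of the Hugoniot locus and I would simply invoke it, since $f''(P)>0$ has essentially already been verified in Section \ref{profile_equation}.

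Next I would treat the 1-shock case symmetrically. The Lax condition $\lambda_1(U^+)<s<\lambda_1(U^-)$ together with the subsonic left-state hypothesis $u^-<c_s(P^-)$ gives $s<u^--c_s(P^-)<0$. The compressive character of a Lax 1-shock here means $P^+>P^-$, i.e.\ $h>0$, again as a consequence of Rankine--Hugoniot and the convexity $f''>0$. With these signs in hand, Lemma \ref{lemma_loc_existence} applies in its first (respectively second) case, provided $|h|<\epsilon$ where $\epsilon=\epsilon(P^-)$ is the constant furnished by the lemma.

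The lemma produces a heteroclinic orbit $P(y)$ for the planar system \eqref{sys_ODE1}--\eqref{sys2} connecting $[P^-,0]$ to $[P^+,0]$. Setting $J(y):=sP(y)-A$ via \eqref{equation_j}, the pair $(P,J)$ satisfies the profile equations \eqref{preq1}--\eqref{preq2} and has the prescribed limits $J^\pm=sP^\pm-A$, so $(P,J)$ is the desired traveling wave for \eqref{eq_sys}. The only conceptual obstacle is the bookkeeping between the Lax/sonic inequalities and the signs $(s,h)$ entering the lemma; once those are matched, the result is immediate. I would expect the compressivity step ($P^+<P^-$ for a 2-shock, $P^+>P^-$ for a 1-shock) to be the most delicate, although it is entirely standard and follows by combining \eqref{Rankine_Hugoniot}--\eqref{Rankine_Hugoniot2} with the convexity already established for $f$.
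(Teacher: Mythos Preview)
Your approach is correct and matches the paper's: reduce to Lemma~\ref{lemma_loc_existence} by extracting the signs of $s$ and of $h=P^+-P^-$ from the Lax/sonic hypotheses, then recover $J$ via \eqref{equation_j}. Your derivation of the sign of $s$ is identical to the paper's (indeed you spell out the one-line chain of inequalities that the paper leaves implicit).

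The one place where you diverge is the compressivity step. You invoke $P^+<P^-$ (2-shock) and $P^+>P^-$ (1-shock) as a standard fact about Lax shocks, citing the convexity $f''>0$. The paper instead derives it explicitly: it solves Rankine--Hugoniot for $A=sP^\pm-J^\pm$, obtaining the two candidates $A_{1,2}=\mp\sqrt{P^+P^-\,((P^+)^\gamma-(P^-)^\gamma)/(P^+-P^-)}$, selects the correct sign of $A$ from the Lax inequality $\lambda_k(U^\pm)\lessgtr s$, and then uses $\lambda_k(U^+)<\lambda_k(U^-)$ together with the monotonicity of $c_s$ to force the sign of $P^+-P^-$. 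Your appeal to $f''>0$ is not quite the right pointer here---that convexity already presupposes fixed $P^\pm$ and concerns the profile ODE rather than the Hugoniot geometry---but the compressivity you invoke is of course true and classical for genuinely nonlinear fields, so there is no genuine gap.
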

\begin{proof}
Let $P^{\pm},J^{\pm},s$ satisfy the Rankine-Hugoniot conditions. We are going to express the end states $J^{\pm}$ in terms of $P^{\pm}$ and $s$, using the Rankine-Hugoniot conditions. In this way we obtain an equivalent expression.\\
Expressing $J^+$ from \eqref{Rankine_Hugoniot} we get
\begin{equation}
\label{expr_flux_rh}
J^+=J^-+s(P^+-P^-).
\end{equation}
Substituting $J^+$ into \eqref{Rankine_Hugoniot2} and dividing by the quadratic coefficient
\begin{equation*}
\frac{P^--P^+}{P^-P^+}\neq 0,
\end{equation*}
we get the quadratic equation
\begin{equation}
\label{quad_equation}
(J^-)^2-2sP^- J^-+\frac{P^-P^+\big{(}(P^+)^\gamma-(P^-)^\gamma-P^-s^2 +((P^-)^2s^2)/(P^+)\big{)}}{P^--P^+}=0.
\end{equation}
The quadratic equation \eqref{quad_equation}  has two solutions $J^{-}_{1,2}=s P^{-}-A_{1,2}$, where
\begin{equation*}
A_{1,2}=\mp \sqrt{P^+ P^-}\sqrt{\frac{(P^+)^{\gamma}-(P^-)^{\gamma}}{P^+-P^-}}
\end{equation*}
Substituting these solutions in equation \eqref{expr_flux_rh} yields two solutions $J^{+}_{1,2}=s P^{+}-A_{1,2}$.\\
\\
Suppose the shock satisfies the Lax condition
\begin{equation*}
\lambda_2(U^+)<s<\lambda_2(U^-).
\end{equation*}
Then $\lambda_2(U^+)<s$, so $u^{+}+c_s(P^+)<s$. Suppose $J^-=s P^{-}-A_1$, with $A_1<0$. This implies $J^+=s P^{+}-A_1$. Then using $u^+=J^+/P^+$, we get $-\frac{A_1}{P^+}+c_s(P^+)<0$. Since $c_s(P^+)>0$, we get a contradiction. So $J^-=s P^{-}-A_2$, where $A_2>0$, which implies $J^+=s P^{+}-A_2$.\\
From the Lax condition we get $\lambda_2(U^+)<\lambda_2(U^-)$, hence $u^{+}+c_s(P^+)<u^{-}+c_s(P^-)$. Using $u^{\pm}=J^{\pm}/P^{\pm}$, we get
\begin{equation*}
\frac{A_2(P^{+}-P^{-})}{P^{+} P^-}<c_s(P^-)-c_s(P^+).
\end{equation*}
Now, suppose $P^+>P^-$. Since the sound speed $c_s(P)$ is nondecreasing, we get $c_s(P^-)-c_s(P^+) \leq 0$. On the other hand $(A_2(P^{+}-P^{-}))/(P^{+} P^-)>0$. So we get a contradiction. Hence $P^{+}<P^{-}$.\\
Suppose we have a 2-shock with a supersonic right state and $u^+>c_s(P^+)$. Then the Lax condition implies $s>0$. Therefore, we have the condition of Lemma \ref{lemma_loc_existence} for a local existence of a profile.\\
\\
Suppose we have a 1-shock:
\begin{equation*}
\lambda_1(U^+)<s<\lambda_1(U^-).
\end{equation*}
In this case $s<\lambda_1(U^-)$. Suppose $J^-=s P^{-}-A_2$, with $A_2>0$. This implies $J^+=s P^{+}-A_2$. Then using $u^{-}=J^{-}/P^{-}$ we get $A_2/P^-<-c_s(P^-)$. Since $c_s(P^{-})>0$, this is a contradiction. Hence $J^-=s P^{-}-A_1$, where $A_1<0$, which implies $J^+=s P^{+}-A_1$.\\
We have from the Lax condition $\lambda_1(U^+)<\lambda_1(U^-)$, so $u^+-u^-<c_s(P^{+})-c_s(P^{-})$. Using $u^{\pm}=J^{\pm}/P^{\pm}$, we get
\begin{equation*}
\frac{A_1(P^{+}-P^{-})}{P^{+} P^-}<c_s(P^+)-c_s(P^-).
\end{equation*}
Suppose $P^+<P^-$. The sound speed $c_s(P)$ is nondecreasing, and
\begin{equation*}
\frac{A_1(P^{+}-P^{-})}{P^{+} P^-}>0,
\end{equation*}
so we get a contradiction. Hence we have $P^+>P^-$.\\
Suppose we have a 1-shock with a subsonic left state. Then $|u^-|<c_s(P^{-})$, and in particular $u^-<c_s(P^{-})$. The Lax condition implies $s<0$. Hence, we have the second condition of Lemma \ref{lemma_loc_existence} for a local existence of a shock profile.
\end{proof}

\subsection{Global existence of profiles}
This section concerns the proof of existence of the profiles in the case of large amplitude shocks. In contrast to the case of local existence, here we shall need conditions between  viscosity and dispersion coefficients, and in particular the latter needs to be sufficiently strong, see 
  Lemma \ref{lemma_global_existence}.
  
We start by rescaling  the velocity $Q$ in terms of $k$ as $\tilde{Q}=k^2 P'/2$. Then the system can be rewritten as follows: 
\begin{align}\label{2D_sys_tilde}
P'&=\frac{2}{k^2}\tilde{Q}=:\tilde{f}_1,\\
\label{2D_sys_tilde1}
\tilde{Q}'&=f(P)-\frac{2 s \mu}{k^2} \tilde{Q}+\frac{2}{k^2}\frac{\tilde{Q}^2}{P}=:\tilde{f}_2.
\end{align}
The crucial observation is that the   reduced (indeed conservative) system 
\begin{eqnarray}\label{2D_sys_ham1}
P'=\frac{2}{k^2}\tilde{Q},\\
\label{2D_sys_ham2}
\tilde{Q}'=f(P)
\end{eqnarray}
admits a   homoclinic loop, which confines the heteroclinic connection  we are looking for. 
 More specifically, the homoclinic loop turns out to be the boundary of an invariant region for the full system.
 Finally, to exclude closed trajectories  inside such invariant region, we shall use the Poincar\'e--Bendixon criterion. 
 
 We start with the case $s>0$ and $P^+<P^-$.
This condition is implied by the condition for a 2-shock with a supersonic right state, and $u^+>c_s(P^+)$.\\
We have
\begin{equation*}
P^2 f'(P)=\gamma P^{\gamma+1}-A^2=0
\end{equation*}
has a unique solution $P_0=(A^2/\gamma)^\frac{1}{\gamma+1}$. Since $f'(P)=\gamma P^{\gamma-1}-P^{-2}A^2$, for $P \in (0,P_0)$, $P^{\gamma+1}<\frac{A^2}{\gamma}$, so $f'(P)<0$. Moreover $f'(P)>0$ for $P>P_0$. We are going to consider choices of parameters, for which $f(P)=0$ has one or two positive solutions. If there are two solutions, they will be the limiting values of the traveling wave profile $P^+<P^-$. We have $P^+< P_0<P^-$. Also we have 
$f''(P)>0$.\\

Let us now consider  the related  reduced system \eqref{2D_sys_ham1}-\eqref{2D_sys_ham2}, or its second order counterpart: 
\begin{equation} 
\label{ham_sys}
\frac{k^2}{2}P''=f(P),
\end{equation}
where we truncate the last two terms in \eqref{2D_sys_tilde1}. 
Let
\begin{equation*}
F(P)=\int f(z) dz = \frac{P^{\gamma+1}}{\gamma+1}-(As+B)P+A^2 \ln P,
\end{equation*}
The system \eqref{ham_sys} has energy
\begin{equation}
\label{ham_energy}
H(P,\tilde{Q})=F(P)-\frac{\tilde{Q}^2}{k^2}-F(P^-).
\end{equation}
Since $F(P)-F(P^-)=\int_{P^-}^P f(z)dz$ and $f(P)<0$ for $P^+<P<P^-$, we have $\int_{P^-}^P f(z)dz>0$ for $P^+<P<P^-$. Also $f(P)>0$ for $0<P<P^+$, and $\lim_{P \downarrow 0}F(P)=-\infty$, so there is a point $P^\star \in (0,P^+)$, such that $F(P)-F(P^-)>0$ for $P^\star<P<P^-$, and $F(P)-F(P^-)<0$ for $0<P<P^\star$. So we have $H(P,0)<0$ for $0<P<P^\star$, and $H(P,0)>0$ for $P^\star<P<P^-$.
The system \eqref{ham_sys} has a homoclinic loop starting at $P^-$, which corresponds to $H(P,\tilde{Q})=0$. Moreover the energy levels, contained inside the homoclinic loop, are compact and correspond to $H(P,\tilde{Q})=C>0$.\\
Let
\begin{equation}
\label{fun_1}
F_1(P)=F(P)-\Big{(}\frac{s \mu P}{k}\Big{)}^2-F(P^-),
\end{equation}
be the restriction of $H$ on the line $\tilde{Q}=s \mu P$.\\
Now, let $s<0$ and $P^-<P^+$. The function $f(P)$ has two roots $P^\pm$. We are going to consider the reverse parameter $\tilde{y}=-y$ and let $\xi$, $\eta$ correspond to $P$, $-\tilde{Q}$, respectively.  We are going to prove existence of a heteroclinic $[\xi,\eta]$, connecting $[P^+,0]$ to $[P^-,0]$, which corresponds to a heteroclinic, connecting $[P^-,0]$ to $[P^+,0]$. 
We rewrite the system \eqref{sys_ODE1}-\eqref{sys2} in terms of $\tilde{y}$:
\begin{align}\label{2D_sys_tilde_new_1}
\xi'&=\frac{2}{k^2}\eta,\\
\label{2D_sys_tilde_new_2}
\eta'&=f(\xi)-\frac{2 \tilde{s} \mu}{k^2} \eta+\frac{2}{k^2}\frac{\eta^2}{\xi},
\end{align}
where $\tilde{s}=-s$, $\tilde{s}>0$.
In the new variables the reduced system \ref{ham_sys} becomes
\begin{align}
\label{2D_sys_ham_new_var1}
\xi'&=\frac{2}{k^2}\eta,\\
\label{2D_sys_ham_new_var2}
\eta'&=f(\xi),
\end{align}
with $'=d/d\tilde{y}$.\\
The system \eqref{2D_sys_ham_new_var1}-\eqref{2D_sys_ham_new_var2} has energy
\begin{equation*}
\tilde{H}(\xi,\eta)=F(\xi)-\frac{\eta^2}{k^2}-F(P^+).
\end{equation*}
The system \eqref{2D_sys_ham_new_var1}-\eqref{2D_sys_ham_new_var2} has a homoclinic loop, starting at $[P^+,0]$, contained in the set $\tilde{H}(\xi,\eta)=0$.\\
Let
\begin{equation*}
\tilde{F}_1(\xi)=F(\xi)-\Big{(}\frac{s \mu \xi}{k}\Big{)}^2-F(P^+).
\end{equation*}
\begin{lemma}\label{lemma_global_existence}Let $s>0$ and $P^+<P^-$. If $F_1(P)\leq 0$ for $0<P<P^+$, then there is a traveling wave profile for the system \eqref{2D_sys_tilde}-\eqref{2D_sys_tilde1}, connecting $[P^-,0]$ to $[P^+,0]$. If in addition
\begin{equation}
\label{cond_0_0}
\frac{s \mu}{k}<\sqrt{-2 f'(P^+)},
\end{equation}
then the traveling wave profile is non-monotone.\\
Let $s<0$ and $P^-<P^+$. If $\tilde{F}_1(\xi)\leq 0$ for $0<\xi<P^-$, then there is a traveling wave profile for the system \eqref{2D_sys_tilde}-\eqref{2D_sys_tilde1}, connecting $[P^-,0]$ to $[P^+,0]$. If in addition
\begin{equation*}
-\frac{s \mu}{k}<\sqrt{-2 f'(P^-)},
\end{equation*}
then the traveling wave profile is non-monotone.
\end{lemma}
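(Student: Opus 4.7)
The plan is to treat the case $s>0$, $P^+<P^-$ in detail; the second case then follows by applying the same argument to the time-reversed system \eqref{2D_sys_tilde_new_1}-\eqref{2D_sys_tilde_new_2}. The eigenvalue analysis at the end of Section~\ref{profile_equation} shows that $[P^-,0]$ is a saddle and $[P^+,0]$ is a sink, which becomes a stable spiral exactly when $s\mu/k<\sqrt{-2f'(P^+)}$; hence, once the heteroclinic is constructed, the non-monotonicity statement follows at once. The core task is therefore to produce a bounded, forward invariant region $\mathcal R\subset\{P>0\}$ whose only interior equilibrium is $[P^+,0]$, to check that the unstable manifold of $[P^-,0]$ enters $\mathcal R$, and to rule out periodic orbits inside it; Poincar\'e--Bendixson will then force convergence of the trajectory to $[P^+,0]$.

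For the invariant region I would take $\mathcal R$ to be the compact set enclosed by the homoclinic loop $\{H=0\}$ of the reduced system \eqref{2D_sys_ham1}-\eqref{2D_sys_ham2}. Differentiating $H$ along the full flow \eqref{2D_sys_tilde}-\eqref{2D_sys_tilde1} yields
\begin{equation*}
\frac{d}{dy}H(P,\tilde Q)=\frac{4\tilde Q^2}{k^4}\left(s\mu-\frac{\tilde Q}{P}\right),
\end{equation*}
so $H$ is nondecreasing along the full flow whenever $\tilde Q\le s\mu P$. On the lower branch of the loop this is automatic. On the upper branch, where $\tilde Q=k\sqrt{F(P)-F(P^-)}$, the inequality is precisely $F_1(P)\le 0$: the hypothesis provides it on $(0,P^+]$, while on $(P^+,P^-)$ one notes that $F_1(P^-)<0$ and $F_1'(P)=f(P)-2(s\mu/k)^2 P<0$ (since $f<0$ between its two roots), so the inequality propagates by monotonicity. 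Because $H>0$ in the interior of the loop, this shows that the full vector field points into $\mathcal R$ along all of $\partial\mathcal R$.

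To finish, I would exclude periodic orbits via Dulac's criterion: on $\{P>0\}$ the weight $g(P)=1/P^2$ gives
\begin{equation*}
\partial_P(g\tilde f_1)+\partial_{\tilde Q}(g\tilde f_2)=-\frac{2s\mu}{k^2 P^2}<0,
\end{equation*}
so no closed orbit can lie in $\mathcal R$. The branch of the unstable manifold at $[P^-,0]$ with $P<P^-$ and $\tilde Q<0$ enters $\mathcal R$ (a quick expansion of $H$ along the unstable eigenvector yields $H>0$ nearby), and Poincar\'e--Bendixson then identifies its $\omega$-limit set with the unique interior equilibrium $[P^+,0]$. Recovering $J(y)=sP(y)-A$ via \eqref{equation_j} produces the desired traveling wave of \eqref{eq_sys}. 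I expect the main obstacle to be the bookkeeping behind the invariant region: the hypothesis $F_1\le 0$ is only given on $(0,P^+)$, yet the invariance argument needs $\tilde Q\le s\mu P$ on the entire upper branch of the loop, including $P\in(P^+,P^-)$; this is precisely the point where the ``viscosity dominant'' nature of the hypothesis is used, through the monotonicity of $F_1$ coming from the sign of $f$ on $(P^+,P^-)$.
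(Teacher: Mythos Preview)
Your proof is correct and follows the same route as the paper: trap the unstable manifold of the saddle inside the homoclinic loop of the reduced system by showing $H$ is nondecreasing along the full flow wherever $\tilde Q\le s\mu P$, exclude closed orbits by a Dulac-type criterion, and conclude with Poincar\'e--Bendixson; the non-monotonicity clause is handled identically via the eigenvalues at $[P^+,0]$. The only noteworthy variation is your Dulac weight $1/P^2$, which gives a globally negative divergence $-2s\mu/(k^2P^2)$ on $\{P>0\}$, whereas the paper uses $1/P$ and obtains $\tfrac{2}{k^2P^2}(\tilde Q-s\mu P)$, sign-definite only on the half-plane $\{\tilde Q\le s\mu P\}$ already needed for invariance---your choice is a clean simplification, and your extension of $F_1\le0$ to $(P^+,P^-)$ via $F_1'=f-2(s\mu/k)^2P<0$ is equivalent to the paper's geometric observation that the upper branch of the loop attains its maximum at $P=P^+$.
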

\begin{proof}
We are going to show that the homoclinic loop of \eqref{2D_sys_ham1}-\eqref{2D_sys_ham2} is confining, the orbit, contained in the unstable manifold of the saddle $[P^-,0]$ is inside it, and its $\omega$-limit set is $[P^+,0]$ (see Figure \ref{fig_heteroclinic}).\\
Let $[P(y),\tilde{Q}(y)]$ be a solution of \eqref{2D_sys_tilde}-\eqref{2D_sys_tilde1} and $\mathcal{H}(y)=H(P(y),\tilde{Q}(y))$. Then
\begin{equation*}
\mathcal{H}' = \frac{\partial H}{\partial P}P'+\frac{\partial H}{\partial \tilde{Q}}\tilde{Q}'=\frac{4 \tilde{Q}^2}{k^4}(s \mu - \frac{\tilde{Q}}{P}).
\end{equation*}
If $\mathcal{H}' \geq 0$ for any point of the homoclinic loop, then it is confining, that is  a trajectory that starts in the homoclinic loop will stay inside it for all $y \geq 0$. Also $s \mu P - \tilde{Q} \geq 0$ implies $\mathcal{H}' \geq 0$ . We require that the homoclinic loop is contained in the region $s \mu P \geq \tilde{Q}$. We would like to prove that the trajectory, contained in the unstable manifold of the saddle $[P^-,0]$ will converge to $[P^+,0]$ as $y \rightarrow +\infty$.\\
\begin{figure}[H]
\begin{center}
\includegraphics[scale=0.8]{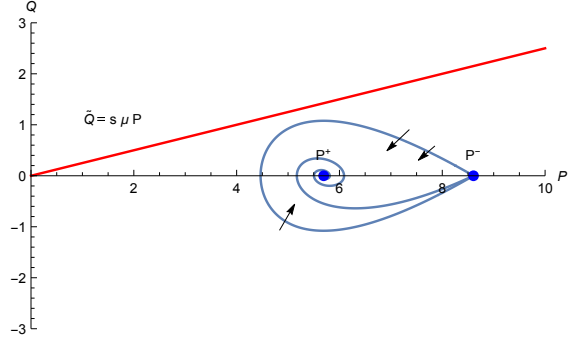}
\end{center}
\caption{The homoclimic loop and the heteroclinic connection for parameters $A=1$, $B=7.3$, $s=1$, $\gamma=1$, $\mu=0.25$, $k=\sqrt{2}$}
\label{fig_heteroclinic}
\end{figure}
We can show that the $\omega$-limit set of the trajectory, contained in the unstable manifold of the right equilibrium is the left equilibrium, if the left equilibrium is stable and we can exclude loops. For this we can use the Poincar\'e-Bendixon criterion, which states if $f \in C^1(E)$, where $E$ is a simply connected region in $\mathbb{R}^2$, and if there exists a function $B \in C^1(E)$, such that the divergence of the vector field $B f$, $\nabla \cdot (B f)$ is not identically zero and does not change sign in $E$, then the planar system $x'=f(x)$ has no closed orbits, lying entirely in $E$ (see \cite{Perko}, p.265, Theorem 2). Under the conditions of this criterion, there are no separatrix cycles or graphics of $x'=f(x)$, lying entirely in $E$ (see \cite{Perko}, p.265).\\
The divergence of $\tilde{f}/P$ is 
\begin{equation*}
\frac{\partial}{\partial P}\Big{(}\frac{\tilde{f}_1}{P}\Big{)}+\frac{\partial}{\partial Q}\Big{(}\frac{\tilde{f}_2}{P}\Big{)}=\frac{2}{k^2 P^2}(\tilde{Q}-s \mu P),
\end{equation*}
therefore it does not change sign and is not identically zero in the same region, where $H' \geq 0$.\\
Now let us consider the jacobians at the equlibria. Let $a=\frac{2}{k^2}>0$, $b=\frac{2 s \mu}{k^2}>0$. Let
\begin{equation}
\label{jacs}
 J_1=  \begin{bmatrix}
    0 & a \\
    f'(P^\pm) & 0
    \end{bmatrix},
     J_2=  \begin{bmatrix}
    0 & a \\
    f'(P^\pm) & -b
    \end{bmatrix}.
\end{equation}
Here $J_1$ is the linearization of \eqref{ham_sys} at $P^\pm$, $\tilde{Q}=0$ and $J_2$ is the linearization of \eqref{2D_sys_tilde} at $P^\pm$, $\tilde{Q}=0$. The eigenvalues of $J_2$ are $(-b \pm \sqrt{b^2+4 a f'(P^\pm)})/2$. At $P^+$, since $f'(P^+)<0$, we have either $b^2+4 a f'(P^+)<0$, or $b^2+4 a f'(P^+)\geq 0$ and $\sqrt{b^2+4 a f'(P^+)}<b$. So in any case $\Re \lambda_1, \Re \lambda_2 <0$. If $b^2+4 a f'(P^+)<0$ the eigenvalues have nonzero imaginary parts.\\
Now we will show that the vector, which is tangent to the unstable manifold of the saddle $[P^-,0]$ is directed inside the homoclinic loop. At $P^-$ $f'(P^-)>0$, so $\sqrt{b^2+4 a f'(P^-)}>b$. Therefore we have a positive and a negative eigenvalue - a saddle. The eigenvector, corresponding to $(-b + \sqrt{b^2+4 a f'(P^-)})/2$ is 
\begin{equation*}
v_2=  -\begin{bmatrix}
    \frac{(b + \sqrt{b^2+4 a f'(P^-)})}{2 f'(P^-)}\\
    1
    \end{bmatrix}
\end{equation*}
The eigenvector, corresponding to the positive eigenvalue of $J_1$ is 
\begin{equation*}
\tilde{v}_2= -\begin{bmatrix}
   \sqrt{ \frac{a}{f'(P^-)}}\\
    1
    \end{bmatrix}
\end{equation*}
If $\tilde{v}_{2,1}>v_{2,1}$, then the eigenvector $v_2$, which is tangent to the unstable subspace is pointing inside the homoclinic loop $H(P,\tilde{Q})=0$. This is true, because $2 b^2+2b\sqrt{b^2+4af'(P^-)}>0$, which implies $(b+\sqrt{b^2+4af'(P^-)})^2>\frac{a}{f'} (2 f'(P^-))^2$ and $\tilde{v}_{2,1}>v_{2,1}$ follows by taking a square root.
\vspace{5mm}\\
Now we would like to consider the sufficient condition for the existence of an oscillatory profile. From \eqref{jacs} the jacobian at the equilibrium $[P^+,0]$ has imaginary eigenvalues if and only if \eqref{cond_0_0} holds.\\
Let us express $\tilde{Q}$ as a function of $P$ from \eqref{ham_energy} and take the positive branch:
\begin{equation*}
\tilde{Q}=g(P)=k\sqrt{F(P)-F(P^-)}.
\end{equation*}
Its derivative is
\begin{equation*}
g'(P)=\frac{k}{2}\frac{f(P)}{\sqrt{F(P)-F(P^-)}}.
\end{equation*}
So $g'(P)$ vanishes if and only if $f(P)=0$. The positive branch of the homoclinic has a maximum at $P=P^+$. So if the line $\tilde{Q}=s \mu P$ does not intersect the homoclinic loop in the interval $(0,P^+]$, it will not intersect it also in the interval $(0,P^-)$. If for all $P$ in the interval $(0,P^+]$ the expression \eqref{fun_1}, which the restricion of $H$ on the line $\tilde{Q}=s \mu P$ is nonpositive, then the line $\tilde{Q}=s \mu P$ does not intersect the homoclinic loop (since $H(P,\tilde{Q})>0$ inside the loop).\\
Now, let use consider the case $s<0$.
The system \eqref{2D_sys_tilde_new_1}-\eqref{2D_sys_tilde_new_2} has the same form as \eqref{2D_sys_tilde}-\eqref{2D_sys_tilde1} and the above proof applies to it. So it has a heteroclinic, connecting $[P^+,0]$ to $[P^-,0]$, which corresponds to a heteroclinic, connecting $[P^-,0]$ to $[P^+,0]$ in the parameter $y$.
\end{proof}
We can show numerically that Lemma \ref{lemma_global_existence} applies to the parameters
\begin{equation}
\label{parameters}
A=1,\mbox{ } B=1.1,\mbox{ } s=1,\mbox{ } \gamma=3/2,\mbox{ } \mu=1,\mbox{ } k=\sqrt{2}.
\end{equation}
 These parameters correspond to a non-monotone profile (see \cite{LMZ18_num}).
\begin{remark} The minimum value of $\mu$ for a given $k$, for which we can guarantee a heteroclinic is  the value, for which $(s \mu P)^2=k^2(F(P)-F(P^-))$ has a unique solution. This is the maximum $\mu$, for which this equation has a solution and then the tangency condition is
\begin{equation*}
\label{tang}
\frac{k f(P)}{2\sqrt{F(P)-F(P^-)}}=s \mu.
\end{equation*}
\end{remark}
\begin{remark} For $\gamma=1$, the condition of Lemma \ref{lemma_global_existence} can be verified analytically. The derivative of $F_1(P)$ is 
\begin{equation*}
F_1'(P)=f(P)-2\Big{(}\frac{s \mu}{k}\Big{)}^2 P.
\end{equation*}
We have $F_1''(P)=f'(P)-2\Big{(}\frac{s \mu}{k}\Big{)}^2<0$ for $0<P<P^+$, so $F_1'(P)$ is monotonically decreasing for $0<P<P^+$. Also $\lim_{P \downarrow 0} F_1'(P)=+\infty$, $F_1'(P^+)=-2\Big{(}\frac{s \mu}{k}\Big{)}^2<0$, hence $F_1'(P)$ has one zero in the interval $(0,P^+)$.\\
If $\gamma \in \mathbb{N}$, then $P(f(P)-2(\frac{s \mu}{k})^2P)$ is a polynomial.
For $\gamma = 1$, let
\begin{equation*}
D=(As+B)^2k^4-4A^2k^2(k^2-2s^2\mu^2).
\end{equation*} 
The roots of $P(f(P)-2(\frac{s \mu}{k})^2P)=0$ are
\begin{equation*}
\frac{(As+B)k^2\pm\sqrt{D}}{2(k^2-2s^2\mu^2)}
\end{equation*}
Consider the zero of $F_1'(P)$ in the interval $(0,P^+)$. At this zero $F_1$ will have a maximum. The condition of Lemma \ref{lemma_global_existence} will be verified, if $F_1(P)\leq 0$ at the zero. This condition in conjunction with \eqref{cond_0_0} guarantees that there is an oscaillatory profle. 	We can write similar formulas for $\gamma = 2,3$.\\
\end{remark}
\begin{remark} The vector, tangent to the unstable manifold of the saddle $[P^-,0]$ is directed inside the homoclimic loop, correspoding to $H(P,\tilde{Q})=0$. So for any trajectory $[P(y),\tilde{Q}(y)]$, contained in the unstable manifold there will be $y_0$, such that $H(P(y_0),\tilde{Q}(y_0))> \epsilon_0>0$. The forward trajectory will be contained in the energy level $H(P,\tilde{Q})\geq \epsilon_0$, which contains the only steady-state $[P^+,0]$.
\end{remark}
\begin{corollary}For fixed values of $\gamma$ ($\gamma \in \{1,3/2,5/3\}$) there are non-empty intervals for $\mu$, for which a oscillatory heteroclinic exists.\end{corollary}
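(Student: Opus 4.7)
The plan is to invoke Lemma \ref{lemma_global_existence}, whose hypotheses for an \emph{oscillatory} profile combine two constraints on $\mu$: first, the confinement condition $F_1(P)\le 0$ on $(0,P^+)$, which forces a lower bound $\mu\ge \mu_{\min}(k,\gamma)$ on the viscosity (we need enough dissipation for the line $\tilde Q = s\mu P$ to lie outside the homoclinic loop); and second, the spectral condition $s\mu/k < \sqrt{-2f'(P^+)}$, which yields the upper bound $\mu < \mu_{\max}(k,\gamma):=k\sqrt{-2f'(P^+)}/s$ (too much dissipation destroys the imaginary part at $P^+$). For each $\gamma\in\{1,3/2,5/3\}$ the task is to show $\mu_{\min}<\mu_{\max}$ after choosing the remaining parameters $A,B,s,k$ appropriately.

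For $\gamma=1$, I would follow the Remark after Lemma \ref{lemma_global_existence}: the equation $F_1'(P)=0$ is quadratic and has an explicit root $P^\star(\mu)\in(0,P^+)$, and the maximum of $F_1$ on $(0,P^+)$ is $\Phi(\mu):=F_1(P^\star(\mu))$. Since $F_1$ is a monotone decreasing function of $\mu^2$ for each fixed $P$, and $\Phi(\mu)\to -\infty$ as $\mu\to\infty$, there is a smallest threshold $\mu_{\min}(k)$ above which condition (i) of Lemma \ref{lemma_global_existence} holds. A direct comparison, performed for a suitable specialization of the parameters in \eqref{parameters}, then shows $\mu_{\min}<\mu_{\max}$, giving the non-empty interval $(\mu_{\min},\mu_{\max})$.

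For $\gamma=3/2$ and $\gamma=5/3$ the primitive $F(P)=P^{\gamma+1}/(\gamma+1)-(As+B)P+A^2\ln P$ is not polynomial, but the two conditions still admit a uniform treatment: $P^+$, $f'(P^+)$ and $F_1(P;\mu,k,\gamma)$ depend continuously on $\gamma$ on the open set where $f$ retains two positive roots, so the strict inequalities in the confinement and spectral conditions propagate to nearby parameter values. The parameters \eqref{parameters} explicitly satisfy both conditions at $\gamma=3/2$ (as illustrated by the non-monotone profile referenced in \cite{LMZ18_num}); for $\gamma=5/3$ I would exhibit an analogous choice of $(A,B,s,k)$. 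In each case one extracts an open slab in $(\gamma,\mu)$-space on which both conditions hold, and the required interval of $\mu$ at the fixed value of $\gamma$ is its horizontal slice.

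The principal obstacle is that the three values $\gamma=1,3/2,5/3$ are not close enough to one another for a purely perturbative argument from a single reference point to cover all three, so $\mu_{\min}$ must be controlled separately for each. A safe strategy is the following scaling: from the definition one has $\mu_{\max}=O(k)$ as $k\to\infty$, whereas $\mu_{\min}$ is determined by a tangency condition in which $s\mu/k$ is bounded in terms of $f$ (independent of $k$). Thus by choosing $k$ large enough in each of the three cases the separation $\mu_{\min}<\mu_{\max}$ is guaranteed, and the corollary follows.
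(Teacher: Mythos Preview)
Your overall strategy---reduce to Lemma~\ref{lemma_global_existence} and read off a lower threshold $\mu_{\min}$ from the confinement condition $F_1\le 0$ and an upper threshold $\mu_{\max}=k\sqrt{-2f'(P^+)}/s$ from the oscillation condition---matches the paper's intent, and your treatment of $\gamma=1$ via the Remark is exactly what the paper has in mind. The paper itself does not supply a formal proof of the corollary: it is meant to follow from the analytical Remark for $\gamma=1$, the numerical verification at the parameters~\eqref{parameters} for $\gamma=3/2$, and an analogous check for $\gamma=5/3$.

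However, your final ``safe strategy'' via scaling in $k$ contains a genuine error. The confinement condition $F_1(P)\le 0$ on $(0,P^+)$ rewrites as
\[
\Bigl(\frac{s\mu}{k}\Bigr)^2 \;\ge\; \sup_{P\in(0,P^+)}\frac{F(P)-F(P^-)}{P^2}\;=:\;C,
\]
where $C$ depends only on $f$ (hence on $A,B,s,\gamma$) and \emph{not} on $k$. Thus $\mu_{\min}=k\sqrt{C}/s$ scales linearly in $k$, exactly like $\mu_{\max}=k\sqrt{-2f'(P^+)}/s$. The ratio $\mu_{\min}/\mu_{\max}=\sqrt{C/(-2f'(P^+))}$ is independent of $k$, so sending $k\to\infty$ cannot create the separation $\mu_{\min}<\mu_{\max}$; either it holds for all $k$ or for none. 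Your own sentence (``$s\mu/k$ is bounded in terms of $f$, independent of $k$'') already says $\mu_{\min}$ is linear in $k$, so the conclusion drawn from it does not follow.

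What actually needs to be checked, for each $\gamma$, is the $k$-free inequality
\[
\sup_{P\in(0,P^+)}\frac{F(P)-F(P^-)}{P^2}\;<\;-2f'(P^+),
\]
at some admissible choice of $(A,B,s)$. This is precisely what the paper does: analytically for $\gamma=1$ (via the explicit root of $F_1'$ in the Remark) and by direct evaluation at~\eqref{parameters} for $\gamma=3/2$; $\gamma=5/3$ is handled the same way. Replace the scaling paragraph with this verification and your argument goes through.
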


\section{Linearization and stability results}
\label{section_linearization}

Using the change of variables $\tau=t/\epsilon$, $y=(x-st)/\epsilon$, $u(x,t)=\bar{u}(\tau,y)$, we get the full linearized operator around the profile for \eqref{eq_sys}:
\begin{equation}
\label{operator_L}
L
\begin{bmatrix}
\tilde{\rho}\\
\tilde{J}
\end{bmatrix}
=\begin{bmatrix}
    s \tilde{\rho}_y - \tilde{J}_y\\
    s \tilde{J}_y +(\frac{J^2}{P^2}\tilde{\rho})_y-(\frac{2 J}{P}\tilde{J})_y-\gamma (P^{\gamma-1}\tilde{\rho})_y+\mu \tilde{J}_{yy}+L_V\tilde{\rho}
    \end{bmatrix},
\end{equation}
where
\begin{equation*}
L_V \tilde{\rho} = \frac{k^2}{2}\tilde{\rho}_{yyy}-2 k^2 \Big{(} (\sqrt{P})_y\Big{(}\frac{\tilde{\rho}}{\sqrt{P}}\Big{)}_y\Big{)}_y,
\end{equation*}
with associated eigenvalue problem given by
\begin{equation}
\label{eq_variable_coeff}
\lambda \begin{bmatrix}
\tilde{\rho}\\
\tilde{J}
\end{bmatrix} = L\begin{bmatrix}
\tilde{\rho}\\
\tilde{J}
\end{bmatrix}.
\end{equation}

A related constant coefficient linear operator is clearly obtained linearizing our original system about a constant state, thus obtaining the same operator, but for $s=0$.
 Denote
\begin{equation}\label{eq:constalbeta}
\alpha=\frac{\bar J^2}{\bar P^2}-\gamma \bar P^{\gamma-1}\mbox{ , }\beta= -\frac{2 \bar J}{\bar P}.
\end{equation}
Then the operator, corresponding to the linearization around the constant steady--state $(\bar P, \bar J)$ is
\begin{eqnarray*}
L_c
\begin{bmatrix}
\tilde{\rho}\\
\tilde{J}
\end{bmatrix}
=\begin{bmatrix}
     - \tilde{J}'\\
    \alpha \tilde{\rho}'+\beta \tilde{J}'+\mu \tilde{J}''+\frac{k^2}{2}\tilde{\rho}'''
    \end{bmatrix},
\end{eqnarray*}
where $'=d/dy$.
The asymptotic operators  at $\pm\infty$ for \eqref{operator_L}  are given by
\begin{eqnarray*}
L_{\pm\infty}
\begin{bmatrix}
\tilde{\rho}\\
\tilde{J}
\end{bmatrix}
=\begin{bmatrix}
    s \tilde{\rho}' - \tilde{J}'\\
    \alpha^\pm \tilde{\rho}'+\beta^\pm \tilde{J}'+\mu \tilde{J}''+\frac{k^2}{2}\tilde{\rho}'''
    \end{bmatrix}
\end{eqnarray*}
where 
\begin{equation*}
\alpha^{\pm}=\frac{(J^{\pm})^2}{(P^{\pm})^2}-\gamma (P^\pm)^{\gamma-1}\mbox{ , }\beta^\pm=s-\frac{2 (J^\pm)}{P^\pm}.
\end{equation*}
Finally, wee may rewrite the equation
\begin{equation*}
\lambda \begin{bmatrix}
\tilde{\rho}\\
\tilde{J}
\end{bmatrix} = L_{\pm\infty}\begin{bmatrix}
\tilde{\rho}\\
\tilde{J}
\end{bmatrix}.
\end{equation*}
as a first order system $V'=M^{\pm}V$, with $V=[\tilde{\rho},\tilde{J},u_1,u_2]^T$.

\subsection{Essential spectrum}
As it is well known, the spectrum of $L$ consists of two   parts: the essential spectrum and the point spectrum; we start here by investigating the former.\\
To this end, we recall that the dispersion relation can be found from $det(i \xi Id-M)=0$:
\begin{equation}
\label{disper_rel1}
\lambda^2+\xi(\mu \xi - i (s+\beta^\pm))\lambda+\xi^2\Big{(}-\alpha^\pm+\frac{k^2 \xi^2}{2}-s(\beta^\pm+i\mu \xi)\Big{)}=0.
\end{equation}
To simplify notation, in what follows, we are going to drop the superscript of $\alpha$ and $\beta$.\\
\begin{figure}[H]
\label{figure_spectrum}
\begin{center}
\includegraphics[scale=0.7]{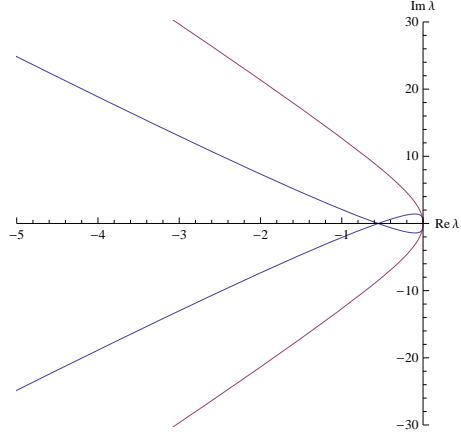}
\end{center}
\caption{The bound for the essential spectrum for parameters $k=0.5,\mu=0.1,s=1,\gamma=3/2,A=1,B=1.3,\rho=0.519,J=-0.418$}
\end{figure}
The essential spectrum is not always stable.
The characteristic equation of $M$ is $det(\nu Id - M)=0$, that is
\begin{equation}
\label{char_eq}
\nu^4+\frac{2 s \mu}{k^2}\nu^3+\frac{2}{k^2}(\alpha+s \beta - \lambda\mu)\nu^2-2\frac{s+\beta}{k^2}\lambda \nu + \frac{2 \lambda^2}{k^2}=0.
\end{equation}

The condition for a subsonic steady-state is $|u|<c_s( P)$, , which becomes
$\alpha<0$ after squaring. The condition $\alpha^{\pm}\leq 0$ corresponds to end states that are either subsonic or sonic.
\begin{lemma}
\label{lemma_essential_spectrum_profile}
If $\mu^2 \neq 2 k^2$ (generically) as long as $\lambda$ is to the right of the curve $\lambda(\xi)$, solving \eqref{disper_rel1}, we have 2 roots with positive real parts and 2 roots with negative real parts of \eqref{char_eq} that is we have consistent splitting. Moreover if $\alpha^\pm \leq 0$, then the bound for the essential spectrum is in the closed left half-plane. Also if $\xi \neq 0$, then $\Re{\lambda_{1,2}}<0$.
\end{lemma}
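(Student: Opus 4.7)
\emph{Proof plan.} The skeleton has three parts mirroring the three assertions.

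\emph{Step 1 (set‑up).} Substituting $\nu=i\xi$ into the characteristic polynomial \eqref{char_eq} and multiplying by $k^2/2$ produces exactly the dispersion relation \eqref{disper_rel1}. Thus a root of \eqref{char_eq} sits on the imaginary axis precisely when $\lambda$ lies on the curve $\Sigma=\{\lambda(\xi):\xi\in\mathbb{R}\}$ parametrised by \eqref{disper_rel1}. Roots of \eqref{char_eq} move continuously with $\lambda$, so the integer‑valued function $\lambda\mapsto\#\{\text{roots with }\Re\nu>0\}$ is locally constant on $\mathbb{C}\setminus\Sigma$. Hence to establish 2+2 splitting on the component ``to the right of $\Sigma$'' it suffices to compute the splitting at one point of this component and to check that $\Sigma$ is a legitimate barrier, i.e.\ that the four roots stay bounded away from the imaginary axis when $\lambda$ stays strictly to the right of $\Sigma$.

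\emph{Step 2 (asymptotic splitting).} I would drive $\lambda\to+\infty$ along the positive real axis and rescale $\nu=\sqrt{\lambda}\,z$. Dividing \eqref{char_eq} by $\lambda^{2}$ and letting $\lambda\to\infty$ gives the reduced equation
\begin{equation*}
z^{4}-\tfrac{2\mu}{k^{2}}z^{2}+\tfrac{2}{k^{2}}=0,
\end{equation*}
a quadratic in $z^{2}$ with discriminant $\tfrac{4(\mu^{2}-2k^{2})}{k^{4}}$. The hypothesis $\mu^{2}\neq 2k^{2}$ is invoked exactly here, ensuring that the two values $z^{2}_{\pm}=(\mu\pm\sqrt{\mu^{2}-2k^{2}})/k^{2}$ are distinct. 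Both of them have strictly positive real part (trivially if $\mu^{2}>2k^{2}$; because $\Re z^{2}_{\pm}=\mu/k^{2}>0$ if $\mu^{2}<2k^{2}$). Consequently each square root $z$ is off the imaginary axis, and symmetry $z\mapsto -z$ produces the desired 2+2 split. A standard perturbation/implicit function argument promotes this to four distinct roots of \eqref{char_eq} for $\lambda$ sufficiently large, with the same 2+2 pattern. Combined with Step 1 this yields consistent splitting throughout the unbounded right component of $\mathbb{C}\setminus\Sigma$.

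\emph{Step 3 (stability of the essential spectrum).} Since \eqref{disper_rel1} is quadratic in $\lambda$, write $\lambda_{\pm}(\xi)=\tfrac{1}{2}(-\mu\xi^{2}+i(s+\beta)\xi\pm\sqrt{D(\xi)})$ with
\begin{equation*}
D(\xi)=(\mu^{2}-2k^{2})\xi^{4}+\bigl(4\alpha-(s-\beta)^{2}\bigr)\xi^{2}+2i\mu(s-\beta)\xi^{3}.
\end{equation*}
Setting $\sqrt{D}=a+ib$ with $a\geq 0$ one has $a^{2}=\tfrac{1}{2}(\Re D+|D|)$. A direct manipulation (squaring the inequality $a\leq\mu\xi^{2}$ and using $|D|^{2}=(\Re D)^{2}+(\Im D)^{2}$) reduces the condition $\Re\lambda_{+}(\xi)\leq 0$ to the clean inequality
\begin{equation*}
\alpha\leq\tfrac{k^{2}\xi^{2}}{2}.
\end{equation*}
Under $\alpha^{\pm}\leq 0$ this holds for every real $\xi$, and $\Re\lambda_{-}\leq-\mu\xi^{2}/2\leq 0$ is automatic from the sum $\lambda_{+}+\lambda_{-}=-\mu\xi^{2}+i(s+\beta)\xi$. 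The same inequality is \emph{strict} whenever $\xi\neq 0$, producing the last assertion.

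\emph{Main obstacle.} The analytically routine part is Step~3; the delicate point is Step~2, specifically the passage from the leading-order quartic back to the full equation and the verification that the "right of the curve" component is connected and really reached by letting $\lambda\to+\infty$. Care is needed when $\mu^{2}$ is close to $2k^{2}$, since the asymptotic splitting becomes degenerate and the perturbation argument loses control—this is exactly why the hypothesis $\mu^{2}\neq 2k^{2}$ appears.
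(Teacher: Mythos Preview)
Your proof is correct. For the essential‑spectrum stability (Step~3) you run exactly the paper's computation, only packaged more tightly: the paper introduces auxiliary quantities $A=2\mu^2\xi^4-\Re D$ and $B=A^2-|D|^2$, checks $A\geq 0$ separately, and computes $B/(8\mu^2)=-2\alpha\xi^6+k^2\xi^8$; your criterion $\alpha\leq k^2\xi^2/2$ is precisely $B\geq 0$, and one verifies (as you implicitly use) that this inequality already forces $A\geq 0$.

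For the consistent splitting (Steps~1--2) you take a genuinely different route. The paper fixes $\lambda\gg 1$ real and analyses the quartic \eqref{char_eq} directly: it computes the discriminant $\Delta$ and the depressed--quartic invariant $D$ to distinguish the cases $\mu^2\lessgtr 2k^2$, then in the real-root case applies Descartes' rule of signs and in the complex-root case rules out purely imaginary roots via a Routh-type argument. You instead rescale $\nu=\sqrt{\lambda}\,z$ and pass to the limit equation $z^4-\tfrac{2\mu}{k^2}z^2+\tfrac{2}{k^2}=0$, observing that both values of $z^2$ have strictly positive real part, hence each contributes one root in each half-plane. This is the same limiting quartic the paper derives \emph{later} (in the Evans-function asymptotics), so your argument is in effect a forward reference; it is shorter and more conceptual than the Descartes/Routh case split, and it makes transparent why $\mu^2=2k^2$ is the exceptional value. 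Both approaches leave implicit the connectedness of the ``right of $\Sigma$'' region (the paper does not argue this either), so on that point you are at the same level of rigor.
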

\begin{proof}
Suppose $\lambda \in \mathbb{R}$, $\lambda \gg 1$. The Discriminant of \eqref{char_eq} is
\begin{equation*}
\Delta = \Big{(} \frac{512}{k^{10}}\mu^4-\frac{2048}{k^8}\mu^2+\frac{2048}{k^6} \Big{)}\lambda^6+\mathcal{O}(\lambda^5).
\end{equation*}
Also 
\begin{equation*}
D = 64 a^4 \Big{(}\hat{s}-\frac{\hat{q}^2}{4}\Big{)}=\Big{(}-\frac{64}{k^4}\mu^2+\frac{128}{k^2}\Big{)}\lambda^2+\mathcal{O}(\lambda),
\end{equation*}
where $a=1$ is the fourth order coefficient of \eqref{char_eq}, $\hat{s}$ and $\hat{q}$ are the zeroth and second order coefficients of the depressed quartic equation, associated to \eqref{char_eq}.\\
If $\mu^2<2 k^2$, then for sufficiently large $\lambda$, $D>0$ and if $\mu^2 \neq 2 k^2$, then for $\lambda \gg 1$, $\Delta>0$, since the leading order terms in the expansion in $\lambda$ have the appropriate signs. Therefore \eqref{char_eq} has two pairs of complex conjugated roots, that are not real. In particular the roots are simple.\\
Since
\begin{equation*}
P=\hat{q}=-\frac{16 \mu}{k^2}\lambda + \mathcal{O}(1),
\end{equation*}
in all cases $P<0$, and if $\mu^2>2 k^2$, then $D<0$, hence the four roots are real and distinct in the regime of real large positive $\lambda$.\\
\\
Now we are going to apply the Descartes' rule of signs in the case of real roots ($\lambda \gg 1$). Suppose first that $s+\beta<0$. Then the number of sign differences between consecutive coefficients is 2, hence there are at most 2 positive roots. If we substitute $\nu \rightarrow -\nu$, then we have again two sign changes, so there are at most 2 negative roots. Suppose that the number of positive roots is less than 2. Then it has to be 0, because it is less than the upper bound by an even number. Then there must be 4 negative roots. This is a contradiction with the upper bound. Hence the polynomial has 2 negative and 2 positive roots.\\
Suppose $s+\beta>0$. Then the number of sign chages is 2. In the case $\nu\rightarrow-\nu$ the number of sign changes
is again 2. Hence similarly as before we get that the polynomial has 2 positive and 2 negative roots.\\
Now, suppose $s+\beta=0$. Then there are 2 sign changes. This is the case also for $\nu\rightarrow -\nu$. Hence again the polynomial has 2 positive and 2 negative roots.\\
\\
Now consider the case $\mu^2<2 k^2$ for real $\lambda \gg 1$. Suppose \eqref{char_eq} does not have a purely imaginary root. The Routh stability criterion is a necessary and sufficient condition for the existence of roots only in the left half-plane. If this is the case, all the coefficients of \eqref{char_eq} must be positive. However the second order coefficient is negative. Hence there are roots in the open right half-plane. If we substitute $\nu \rightarrow -\nu$ the second order coefficient is again negative, therefore \eqref{char_eq} must have roots also in the left half-plane. Hence there are two complex conjugate roots in the left half-plane and two complex conjugated roots in the right half-plane.\\
Suppose we have a polynomial $\nu^4 + a_1 \nu^3 + a_2 \nu^2 + a_3 \nu + a_4$ with all $a_i \neq 0$, $a_i$ - real. If this polynomial has a purely imaginary root, then $\frac{a_3}{a_1}>0$ and $a_3^2+a_1^2 a_4 = a_1 a_2 a_3$.\\
Let $s + \beta>0$. Then all $a_i \neq 0$, $a_1>0$ and $a_3<0$, hence $\frac{a_3}{a_1}<0$, hence \eqref{char_eq} does not have a purely imaginary root.\\
Now, let $s+\beta<0$. In this case $a_i \neq 0$ and $\frac{a_3}{a_1}>0$. However in the regime $\lambda \gg 1$, which we are considering:
\begin{equation*}
a_3^2+a_1^2 a_4 - a_1 a_2 a_3 = \frac{4}{k^6}(k^2(s+\beta)^2+2 s^2\mu^2-2 s(s+\beta)\mu^2)\lambda^2+\mathcal{O}(\lambda).
\end{equation*}
The coefficient of $\lambda^2$ is sum of three positive terms, therefore it is positive. Hence $a_3^2+a_1^2 a_4 \neq a_1 a_2 a_3$. Therefore also in this case \eqref{char_eq} does not have a purely imaginary root.\\
Now, suppose $s+\beta=0$. It follows from the factorization $(\nu^2+p)(\nu^2+q\nu+r)$, with $p \geq 0$ that $\nu^4+a_1 \nu^3 + a_2 \nu^2 + a_4=0$, with $a_i \neq 0$, $a_i$ real, does not have a purely imaginary root. If it would have a purely imaginary root, then this would imply $p=0$ or $q=0$. But we would have $a_4=0$ in the first case and $a_1$=0 in the second. We have $a_1=2 s \mu/k^2 \neq 0$ and $a_4=2 \lambda^2/k^2 \neq 0$, so this is not the case.\\
It follows that \eqref{char_eq} does not have a purely imaginary root.\\
\\
Now we are going to derive a sufficient condition for the stability of the essential spectrum. It applies to constant steady-state and also to the stability of the asymptotic steady-states as $y \rightarrow \pm \infty$. Note that the constant steady-state is not always stable.\\
The roots of the dispersion relation \eqref{disper_rel1} are
\begin{equation*}
\lambda_{1,2}=\frac{1}{2}\Big{(}-\mu \xi^2 + i (s + \beta)\xi \pm \sqrt{D}\Big{)}
\end{equation*}
where
\begin{equation*}
D = \Big{(} \xi (\mu \xi -i (s+\beta))\Big{)}^2-4 \xi^2 \Big{(}-\alpha + \frac{k^2 \xi^2}{2} -s (\beta + i \mu \xi )\Big{)}=p+i q,
\end{equation*}
with
\begin{align*}
p &= -s^2 \xi^2 +4 \alpha \xi^2 + 2 s \beta \xi^2-\beta^2 \xi^2 -2 k^2 \xi^4 + \mu^2 \xi^4,\\
q &= 2 s \mu \xi^3 - 2 \beta \mu \xi^3.
\end{align*}
The condition
\begin{equation}
\label{cond_stab1}
 -\mu \xi^2 + |\Re \sqrt{D}| \leq 0 
 \end{equation}
 guarantees that $\lambda_{1,2}$ are in the closed left half-plane. Since $\mu \xi^2 \geq 0$, it is equivalent to
 \begin{equation}
 \label{cond_stab2}
 (\Re \sqrt{D})^2 \leq \mu^2 \xi^4.
 \end{equation}
 We have
 \begin{equation*}
 \Re \sqrt{D} = \frac{\sqrt{2}}{2} \sqrt{\sqrt{p^2+q^2}+p}
 \end{equation*}
The condition \eqref{cond_stab2} is equivalent to
 \begin{equation}
 \label{cond_stab3}
 \sqrt{p^2+q^2}\leq 2 \mu^2 \xi^4 - p.
 \end{equation}
 Let
 \begin{equation*}
 A = 2 \mu^2 \xi^4 - p = \xi^2 (s-\beta)^2-4 \alpha \xi^2 + 2 k^2 \xi^4 + \mu^2 \xi^4.
 \end{equation*}
 If $\alpha \leq 0$, then $A \geq 0$ for all $\xi \in \mathbb{R}$ as a sum of four non-negative terms. Also, if $\xi \neq 0$, $A>0$. If $A \geq 0$, then \eqref{cond_stab3} is equivalent to $A^2 - p^2 - q^2 \geq 0$. Let $B = A^2 - p^2 - q^2$. Then \eqref{cond_stab3} is equivalent to $B \geq 0$. We have
 \begin{equation*}
 \frac{B}{8 \mu^2}=-2 \alpha \xi^6 + k^2 \xi^8.
 \end{equation*}
 Therefore if $\alpha \leq 0$, $A \geq 0$ and $B \geq 0$. Also if $\xi \neq 0$, $B>0$. Hence \eqref{cond_stab1} holds.
\end{proof}

\subsection{Point spectrum for a profile}
In contrast with the situation described in the proposition above, the localization of the point spectrum of the linearized operator along a profile is more involved (note that our operatori is not self adjoint), and in particular the energy estimate in Lemma \ref{lemma_estimate_real_part_profile} is proved only for $\Re(\lambda)$ sufficiently big.
Thus, to locate the point spectrum in that case, an efficient method is to locate   the zeros of the Evans function,  the latter being exactly the eigenvalues of the operator under consideration. The argument needed requires a careful analysis of the behavior of such function  for large $|\lambda|$, which gives a quantitative version of  the asymptotic results of \cite{Sandstede}, excluding the presence of eigenvalues for $|\lambda|> C$ with an \emph{explicit} bound for the constant (see Lemma \ref{upper_bound_lambda}). To give the definition of the Evans function we shall use later on, we first rewrite  here below our problem in itegrated variables.
\subsubsection{System in integrated variables}
For the analysis of the eigenvalue problem \eqref{eq_variable_coeff} we will need the Evans function and to this end it is also important to re-express the above linearized system in terms of integrated variables, 
because  this transformation removes the zero eigenvalue (always present, being its eigenfunction given by the derivative of the profile), without further modifications of the spectrum; see, for instance \cite{Humpherys}.
To this end, consider
\begin{equation*}
\hat{\rho}(x)=\int_{-\infty}^x \tilde{\rho}(y)dy,\mbox{ }\hat{J}(x)=\int_{-\infty}^x \tilde{J}(y) dy,
\end{equation*} 
Integrating the equation \eqref{eq_variable_coeff} it follows that for $\lambda \neq 0$ the integrated variables $\tilde{\rho}$ and $\tilde{J}$ decay exponentially as $|x|\rightarrow +\infty$. Expressing $\tilde{\rho}$ and $\tilde{J}$ in terms of $\hat{\rho}$ and $\hat{J}$, and integrating \eqref{eq_variable_coeff} from $-\infty$ to $x$ we get the system in integrated variables:
\begin{align}
\label{sys_integrated_variables}
\lambda \hat{\rho} &= s \hat{\rho}'-\hat{J}',\\
\label{sys_integrated_variables1}
\lambda \hat{J} &= f_1 \hat{\rho}' + f_2 \hat{J}'+\mu \hat{J}''+\frac{k^2}{2}\hat{\rho}'''-2 k^2 (\sqrt{P})'\Big{(}\frac{\hat{\rho}'}{\sqrt{P}}\Big{)}',
\end{align}
with
\begin{align*}
f_1(x) &= \frac{J(x)^2}{P(x)^2}-\gamma P(x)^{\gamma-1},\\
f_2(x) &= s - 2 \frac{J(x)}{P(x)}.
\end{align*}
We can rewrite \eqref{sys_integrated_variables}-\eqref{sys_integrated_variables1} as $V'=\hat{M}(x,\lambda)V$, where $V=[\hat{\rho},\hat{J},u_1,u_2]^T$ and 
\begin{equation}
\label{mat_M1}
\hat M(x,\lambda) = \begin{bmatrix}
0 & 0 & 1 & 0\\
-\lambda & 0 & s & 0\\
0 & 0 & 0 & 1\\
\frac{2 \lambda f_2}{k^2} & \frac{2 \lambda}{k^2} & \frac{2 \lambda \mu}{k^2}-\frac{2 f_1}{k^2}-\frac{2 s f_2}{k^2}-\frac{(P')^2}{P^2}& \frac{2 P'}{P}-\frac{2 s \mu}{k^2}
\end{bmatrix}.
\end{equation}
The limit of $\hat{M}(x,\lambda)$ as $x \rightarrow \pm \infty$ is given by
\begin{equation}
\label{mat_M}
M^{\pm} = \begin{bmatrix}
0 & 0 & 1 & 0\\
-\lambda & 0 & s & 0\\
0 & 0 & 0 & 1\\
\frac{2 \beta^\pm \lambda}{k^2} & \frac{2 \lambda}{k^2} & \frac{2}{k^2}(\mu \lambda - s \beta^\pm - \alpha^\pm) & -\frac{2 s \mu}{k^2}
\end{bmatrix},
\end{equation}

\subsubsection{The Evans function}
To define  the Evans function, let us consider the equation $Y'=\hat{M}(y,\lambda)Y$, where $\hat{M}(y,\lambda)$ is defined in \eqref{mat_M1}.
As it is manifest, its limits at $\pm\infty$ are given by  the matrices $M^{\pm}$, defined by \eqref{mat_M}, corresponding to limit states $P^{\pm}$, and we assume  these matrices are hyperbolic. This is always true if we are to the right of the bound for the essential spectrum. In addition, we assume that $M^{-}$ has $k$ unstable eigenvalues $\nu^{-}_1,\dots,\nu^{-}_k$ (i.e.\ $\Re(\nu^{-}_i)>0$), and $M^{+}$ has $n-k$ stable eigenvalues $\nu^{+}_1,...,\nu^{+}_{n-k}$ (i.e.\ $\Re(\nu^{+}_i)<0$), and denote the corresponding (normalized) eigenvectors by $v^{\pm}_i$. In our case $n=4$ and $k=2$. Let $Y^{-}_i$ be a solution of $Y'=M(y,\lambda)Y$, satisfying $exp(\nu^- y)Y^{-}(y)$ tends to $v^{-}_i$ as $y \rightarrow -\infty$ and $exp(\nu^+ y)Y^{+}(y)$ tends to $v^{+}_i$ as $y \rightarrow +\infty$. Then,  the Evans function can be defined by
\begin{equation*}
E(\lambda) = det(Y^-_1(0), .., Y^-_k(0),  Y^+_1(0), ... ,Y^+_{n-k}(0)).
\end{equation*}
As a consequence, a point $\lambda \in \mathbb{C}$ is in the point spectrum of $L$ if and only if $E(\lambda)=0$.
\subsubsection{Estimate for $\Re(\lambda)$ for variable coefficients}
Now we consider the eigenvalue equation \eqref{eq_variable_coeff} with variable coefficients. 
We are going to derive a bound of the real part of the eigenvalues $\Re(\lambda)$ using an energy estimate. 
\begin{lemma}
\label{lemma_estimate_real_part_profile}
For each eigenvalue of \eqref{eq_variable_coeff} with variable profile we have\\ $\Re (\lambda) \leq \max(C_1,C_2, 2 \frac{C_3}{k^2})$, where
\begin{eqnarray*}
C_1 = \frac{1}{2 \epsilon_1}+\frac{M_2}{\epsilon_2}+\frac{M_4}{\epsilon_3},\mbox{ }
C_2 = \frac{\epsilon_2}{2}+M_3,\mbox{ }
C_3 = \frac{M_1}{\epsilon_2}+\frac{M_5}{\epsilon_3},
\end{eqnarray*}
with $\epsilon_2>0$, $\epsilon_1/2+\epsilon_3/2<\mu$, and explicit constants $M_i$.
\end{lemma}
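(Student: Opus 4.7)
The plan is to establish the bound by a weighted energy estimate on the eigenvalue system $\lambda(\tilde{\rho},\tilde{J})=L(\tilde{\rho},\tilde{J})$, aiming for an identity of the shape
\begin{equation*}
\Re(\lambda)\Bigl(\|\tilde{\rho}\|^2+\|\tilde{J}\|^2+\tfrac{k^2}{2}\|\tilde{\rho}_y\|^2\Bigr)=-\mu\|\tilde{J}_y\|^2+\mathcal{R},
\end{equation*}
where $\mathcal{R}$ collects integrals pairing the uniformly bounded profile coefficients with the perturbation and its first derivative. As a first step, I would take the $L^2$ inner product of the $\tilde{\rho}$-equation with $\overline{\tilde{\rho}}$ and of the $\tilde{J}$-equation with $\overline{\tilde{J}}$, integrate over $\mathbb{R}$ and extract real parts; since the profile $P$ is smooth and bounded away from $0$, all coefficients of $L$ are bounded, the purely convective contributions $s\int\tilde{\rho}_y\overline{\tilde{\rho}}$ and $s\int\tilde{J}_y\overline{\tilde{J}}$ vanish, and the viscous term yields the coercive $-\mu\|\tilde{J}_y\|^2$.

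The key manipulation concerns the dispersive integral $\int L_V\tilde{\rho}\,\overline{\tilde{J}}$. After two integrations by parts its leading piece becomes $-\tfrac{k^2}{2}\int\tilde{\rho}_{yy}\overline{\tilde{J}_y}$, and I would then substitute $\overline{\tilde{J}_y}=s\overline{\tilde{\rho}_y}-\bar{\lambda}\overline{\tilde{\rho}}$ using the first line of the system. The $s$-part contributes $\Re\int\tilde{\rho}_{yy}\overline{\tilde{\rho}_y}=\tfrac12\int(|\tilde{\rho}_y|^2)_y=0$, while the $\lambda$-part yields exactly $-\tfrac{k^2\Re(\lambda)}{2}\|\tilde{\rho}_y\|^2$, which I move to the left-hand side to generate the weighted norm above. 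The quasilinear remainder of $L_V$, involving $(\sqrt{P})_y$, together with the Euler convection terms involving $J/P$, $J^2/P^2$ and $P^{\gamma-1}$ (and derivatives of these coefficients after integration by parts), feed the integrals making up $\mathcal{R}$; each is controlled by Cauchy--Schwarz using uniform profile bounds.

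Next I would apply Young's inequality with three parameters $\epsilon_1,\epsilon_2,\epsilon_3>0$. The parameter $\epsilon_1$ is dedicated to the cross term $-\Re\int\tilde{J}_y\overline{\tilde{\rho}}$ coming from the first equation (no profile coefficient, hence the bare $1/(2\epsilon_1)$ entry in $C_1$); $\epsilon_3$ absorbs the dispersive-remainder pairings of $\tilde{J}_y$ with $\tilde{\rho}$ and $\tilde{\rho}_y$ (producing $M_4/\epsilon_3$ in $C_1$ and $M_5/\epsilon_3$ in $C_3$); and $\epsilon_2$ is reserved for pairings of $\tilde{J}$ against $\tilde{\rho}$ or $\tilde{\rho}_y$ (producing $M_2/\epsilon_2$ in $C_1$, $M_1/\epsilon_2$ in $C_3$, and $\epsilon_2/2$ in $C_2$; any purely coercive contribution of the form $\int\partial_y(\cdot)|\tilde{J}|^2$ gives the additive $M_3$). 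The admissibility hypothesis $\epsilon_1/2+\epsilon_3/2<\mu$ is precisely what ensures that the total coefficient of $\|\tilde{J}_y\|^2$ generated by $\mathcal{R}$ is strictly dominated by $\mu$, so the negative viscous term absorbs it.

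Collecting the resulting inequality
\begin{equation*}
\Re(\lambda)\Bigl(\|\tilde{\rho}\|^2+\|\tilde{J}\|^2+\tfrac{k^2}{2}\|\tilde{\rho}_y\|^2\Bigr)\leq C_1\|\tilde{\rho}\|^2+C_2\|\tilde{J}\|^2+C_3\|\tilde{\rho}_y\|^2,
\end{equation*}
the claim $\Re(\lambda)\leq\max(C_1,C_2,2C_3/k^2)$ follows by a simple comparison argument: if $\Re(\lambda)$ strictly exceeded that maximum, each of the three terms on the right would be strictly smaller than the corresponding term on the left, contradicting the inequality for any non-trivial eigenfunction. The main technical obstacle I anticipate is the careful bookkeeping of the several variable-coefficient terms arising from both the Euler convection and the quasilinear part of $L_V$, matching each to the correct Young parameter so that the final constants take precisely the stated form and no residual $\|\tilde{J}_y\|^2$ contribution is left over to spoil the admissibility condition on the $\epsilon_i$.
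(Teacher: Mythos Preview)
Your proposal is correct and follows essentially the same route as the paper: pair the two equations with $\overline{\tilde{\rho}}$ and $\overline{\tilde{J}}$, use the first equation to convert the leading dispersive term into $-\tfrac{k^2}{2}\Re(\lambda)\|\tilde{\rho}_y\|^2$, and distribute the remaining cross terms via Young's inequality with parameters $\epsilon_1,\epsilon_2,\epsilon_3$ exactly as you describe. The paper's bookkeeping of the constants $M_1,\dots,M_5$ matches your allocation (with $M_1,M_2$ from the Euler pressure/convection coefficient and its derivative, $M_3$ from $(J/P)'$, and $M_4,M_5$ from the quasilinear remainder of $L_V$).
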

\begin{proof}
We multiplying the first equation of \eqref{eq_variable_coeff} by $\bar{\tilde{\rho}}$ and the second equation of \eqref{eq_variable_coeff} by $\bar{\tilde{J}}$.
Let $\tilde{\rho}=\tilde{\rho_r}+i \tilde{\rho_i}$ and $\tilde{J}=\tilde{J_r}+i \tilde{J_i}$. 
We have
\begin{align}
\label{eq_deriv_1}
-\alpha s \Re(\tilde{\rho}'\overline{\tilde{\rho}})&=-\alpha s(\tilde{\rho_r}' \tilde{\rho_r}+\tilde{\rho_i}' \tilde{\rho_i})=-\frac{\alpha s}{2}(|\tilde{\rho}|^2)',\\
\label{eq_deriv_2}
\beta \Re(\tilde{J}'\overline{\tilde{J}}) &= \frac{\beta}{2}(|\tilde{J}|^2)'
\end{align}
By integration by parts we obtain 
\begin{equation}
\label{eq_const4}
\int \mu(\tilde{J_r}''\tilde{J_r}+\tilde{J_i}''\tilde{J_i})dy = - \mu \int ((\tilde{J_r}')^2+(\tilde{J_i}')^2)dy=-\mu \int |\tilde{J}'|^2 dy.
\end{equation}
We have 
\begin{equation*}
\Re(\tilde{\rho}'''\overline{J})=\tilde{\rho_r}'''\tilde{J_r}+\tilde{\rho_i}'''\tilde{J_i},
\end{equation*}
hence by integration by parts
\begin{equation*}
\int \frac{k^2}{2}\Re(\tilde{\rho}'''\overline{\tilde{J}}) dy=-\frac{k^2}{2}\int (\tilde{J_r}' \tilde{\rho_r}''+\tilde{J_i}' \tilde{\rho_i}'')dy
\end{equation*}
Substituting $\tilde{J_r}'$ and $\tilde{J_i}'$ from the first equation of \eqref{eq_variable_coeff} yields
\begin{align*}
&-\frac{k^2}{2}\int (\tilde{J_r}' \tilde{\rho_r}''+\tilde{J_i}' \tilde{\rho_i}'')dy \nonumber\\
&=-\frac{k^2}{2}\int \Big{(} (s \tilde{\rho_r}'-\Re (\lambda) \tilde{\rho_r}\nonumber\\
&+\Im(\lambda) \tilde{\rho_i})\tilde{\rho_r}''+(s \tilde{\rho_i}'-\Re (\lambda) \tilde{\rho_i}-\Im(\lambda) \tilde{\rho_r})\tilde{\rho_i}''\Big{)}dy\nonumber\\
&=-\frac{k^2}{2}\int (t_1+t_2+t_3)dy,
\end{align*}
where
\begin{align*}
t_1 &= s(\tilde{\rho_r}'' \tilde{\rho_r}'+\tilde{\rho_i}'' \tilde{\rho_i}')=\frac{s}{2}\Big{(}(\tilde{\rho_r}')^2+(\tilde{\rho_i}')^2\Big{)}',\\
t_2 &= \Im(\lambda) (\tilde{\rho_i} \tilde{\rho_r}''-\tilde{\rho_i}'' \tilde{\rho_r}),\\
t_3 &= -\Re(\lambda) (\tilde{\rho_r}\tilde{\rho_r}''+\tilde{\rho_i}\tilde{\rho_i}'').
\end{align*}
Therefore
\begin{equation*}
-\frac{k^2}{2}\int t_1 dy = 0.
\end{equation*}
Using integration by parts we get
\begin{equation*}
-\frac{k^2}{2}\int t_2 dy = -\frac{k^2 \Im(\lambda)}{2}\int (-\tilde{\rho_r}'\tilde{\rho_i}'+\tilde{\rho_i}'\tilde{\rho_r}')dy=0,
\end{equation*}
and again by integration by parts
\begin{equation*}
-\frac{k^2}{2}\int t_3 dy = -\frac{\Re(\lambda) k^2}{2}\int \Big{(} (\tilde{\rho_r}')^2+(\tilde{\rho_i}')^2\Big{)} dy=-\frac{\Re(\lambda) k^2}{2}\int |\tilde{\rho}'|^2 dy,
\end{equation*}
which yields 
\begin{equation}
\label{eq_const5}
\int \frac{k^2}{2}\Re(\tilde{\rho}'''\overline{\tilde{J}}) dy = -\frac{\Re(\lambda) k^2}{2}\int |\tilde{\rho}'|^2 dy.
\end{equation}
Taking the real part of $s  \tilde{\rho}' \overline{\tilde{\rho}}$ and integrating we get
\begin{equation*}
\int \Re(s \tilde{\rho}' \overline{\tilde{\rho}})dy=0
\end{equation*}
by \eqref{eq_deriv_1}. Moreover
\begin{equation*}
\int \Re(-\tilde{J}' \overline{\tilde{\rho}})dy \leq \frac{1}{2 \epsilon_1}\int |\tilde{\rho}|^2 dy + \frac{\epsilon_1}{2}\int |\tilde{J'}|^2 dy
\end{equation*}
by Young inequality. From the second equation of \eqref{eq_variable_coeff} we have
\begin{equation*}
\int \Re(s \tilde{J}' \overline{\tilde{J}})dy=0
\end{equation*}
by \eqref{eq_deriv_2}. Also
\begin{align*}
&\int \Re\Big{(}\Big{(}(\frac{J^2}{P}-\gamma P^{\gamma-1}) \tilde{\rho} \Big{)}' \overline{\tilde{J}} \Big{)} dy \nonumber\\
&\leq \int  \Big{(} \frac{\epsilon_2}{2} |\tilde{J}|^2 +\frac{1}{2 \epsilon_2}(|(f_1 \tilde{\rho_r})'|^2 + |(f_1 \tilde{\rho_i})'|^2) \Big{)}dy \nonumber\\
& \leq  \int  \Big{(} \frac{\epsilon_2}{2} |\tilde{J}|^2 +\frac{1}{\epsilon_2}(|f_2|^2 |\tilde{\rho}|^2+(|f_1|^2 |\tilde{\rho'}|^2)\Big{)}dy \nonumber\\
&\leq  \frac{\epsilon_2}{2} \int |\tilde{J}|^2 dy + \frac{M_2}{\epsilon_2}\int |\tilde{\rho}|^2 dy+\frac{M_1}{\epsilon_2}\int |\tilde{\rho}'|^2 dy,
\end{align*}
where $f_1 = \frac{J^2}{P}-\gamma P^{\gamma-1}$, $f_2 = f_1'$, $M_{1,2} = \sup_{y \in \mathbb{R}}f_{1,2} (y)^2$. Further
\begin{align*}
\int \Re\Big{(}\Big{(}-\frac{2 J}{P} \tilde{J}\Big{)}' \overline{\tilde{J}}\Big{)}dy&=-\int\Big{(}\frac{2 J}{P}\Big{)}\Big{(}\tilde{J_r} \tilde{J_r}'+\tilde{J_i} \tilde{J_i}'\Big{)}dy \nonumber\\
=\int \Big{(}\frac{J}{P}\Big{)} (|\tilde{J}|^2)'dy &=-\int \Big{(}\frac{J}{P}\Big{)}'|\tilde{J}|^2dy \leq M_3 \int |\tilde{J}|^2 dy,
\end{align*}
by integration by parts, where $f_3 = (\frac{J}{P})'$ and $M_3 = \sup_{y \in \mathbb{R}}|f_3 (y) |$. Moreover
\begin{align*}
&\int \Re \Big{(}-2k^2 \Big{(}(\sqrt{P})'\Big{(} \frac{\tilde{\rho}}{\sqrt{P}}\Big{)}' \Big{)}' \overline{\tilde{J}}\Big{)}dy\nonumber\\
& =- \int 2k^2 (\sqrt{P})' \Big{(}\Big{(} \frac{\tilde{\rho_r}}{\sqrt{P}}\Big{)}' \tilde{J_r}' +\Big{(} \frac{\tilde{\rho_i}}{\sqrt{P}}\Big{)}' \tilde{J_i}' \Big{)} dy \nonumber\\
& \leq \int  \Big{(}|2 k^2 (\sqrt{P})' \Big{(} \frac{\tilde{\rho_r}}{\sqrt{P}}\Big{)}' \tilde{J_r}'| +|2 k^2 (\sqrt{P})' \Big{(} \frac{\tilde{\rho_i}}{\sqrt{P}}\Big{)}' \tilde{J_i}'| \Big{)} dy \nonumber\\
&=\int \Big{(}|f_4 \tilde{\rho_r} + f_5 \tilde{\rho_r}'||J_r'|+|f_4 \tilde{\rho_i} + f_5 \tilde{\rho_i}'||\tilde{J_i}'| \Big{)} \nonumber\\
& \leq \frac{\epsilon_3}{2} \int |\tilde{J}'|^2 dy + \frac{1}{2 \epsilon_3}\int \Big{(}|f_4 \tilde{\rho_r} + f_5 \tilde{\rho_r}'|^2+|f_4 \tilde{\rho_i} + f_5 \tilde{\rho_i}'|^2 \Big{)}dy  \nonumber\\
& \leq \frac{\epsilon_3}{2} \int |\tilde{J}'|^2 dy + \frac{1}{\epsilon_3}\int f_4^2 |\tilde{\rho}|^2 dy + \frac{1}{\epsilon_3}\int f_5^2 |\tilde{\rho}'|^2 dy \nonumber \\ 
& \leq \frac{\epsilon_3}{2} \int |\tilde{J}'|^2 dy + \frac{M_4}{\epsilon_3}\int |\tilde{\rho}|^2 dy+\frac{M_5}{\epsilon_3}\int |\tilde{\rho}'|^2 dy,
\end{align*}
where $f_4 = k^2 \frac{P' (\sqrt{P})'}{P^{3/2}}$, $f_5 = 2 k^2 \frac{(\sqrt{P})'}{\sqrt{P}}$, $M_{4,5} = \sup_{y \in \mathbb{R}}f_{4,5} (y)^2$. Using \eqref{eq_const4} and \eqref{eq_const5} and collecting the inequalities we get
\begin{align}
&(\Re(\lambda)-C_1)\int |\tilde{\rho}|^2 dy + (\Re(\lambda)-C_2)\int |\tilde{J}|^2 dy \nonumber\\
\label{estimate_re_lambda}
&+ (\frac{k^2}{2}\Re(\lambda)-C_3)\int |\tilde{\rho}'|^2 dy \leq (-\mu + \frac{\epsilon_1}{2}+\frac{\epsilon_3}{2})\int |\tilde{J}'|^2 dy
\end{align} 
Hence if $\frac{\epsilon_1}{2}+\frac{\epsilon_3}{2} < \mu$ and $\Re(\lambda)>\max(C_1,C_2, 2 \frac{C_3}{k^2})$ the left hand side of \eqref{estimate_re_lambda} is positive and the right hand side is negative, which is a contradiction. Therefore $\Re (\lambda) \leq \max(C_1,C_2, 2 \frac{C_3}{k^2})$.
\end{proof}
\subsubsection{The Evans function for large $|\lambda|$}
Now we are going to show that we can consider a scalar equation to locate the eigenvalues.\\
Let us consider the eigenvalue problem with linear dispersion:
\begin{align}
\lambda \tilde{\rho} &= s \tilde{\rho}'-\tilde{J}' \label{eigenvalue_linear_dispersion_1},\\
\lambda \tilde{J} &= (f_1(x) \tilde{\rho})' + (f_2(x) \tilde{J})'+\mu \tilde{J}'' + \frac{k^2}{2} \tilde{\rho}'''
\label{eigenvalue_linear_dispersion_2}
\end{align}
with
\begin{align}
f_1(x) &= \frac{J(x)^2}{P(x)^2}-\gamma P(x)^{\gamma-1}, \nonumber\\
f_2(x) &= s - 2 \frac{J(x)}{P(x)} \label{funs}
\end{align}
Consider also
\begin{align}
\hat{\rho}^{(4)}+\frac{2 s \mu}{k^2} \hat{\rho}^{'''} + \frac{2}{k^2}(f_1 + f_2 s - \lambda \mu)\hat{\rho}^{''}+\frac{2}{k^2}(f_1'+s f_2'-\lambda f_2 - s \lambda)\hat{\rho}' \nonumber\\
\label{scalar_ODE}
+\frac{2}{k^2}(-\lambda f_2'+\lambda^2)\hat{\rho}=0 
\end{align}
and the eigenvalue problem with nonlinear dispersion:
\begin{align}
\label{eigenvalue_equation_1}
\lambda \tilde{\rho} &= s \tilde{\rho}'-\tilde{J}',\\
\lambda \tilde{J} &= (f_1(x) \tilde{\rho})' + (f_2(x) \tilde{J})'+\mu \tilde{J}'' + f_3(x) \tilde{\rho} + f_4(x) \tilde{\rho}' \nonumber \\ 
&+ f_5(x) \tilde{\rho}''+ \frac{k^2}{2} \tilde{\rho}''',
\label{eigenvalue_equation_2}
\end{align}
which is equivalent to \eqref{eq_variable_coeff},
where $f_1(x)$ and $f_2(x)$ are given by \eqref{funs}, and
\begin{align*}
f_3(x) &= k^2 \Big{(} \frac{P'(x)P''(x)}{P(x)^2} - \Big{(}\frac{P'(x)}{P(x)}\Big{)}^3 \Big{)}, \nonumber\\
f_4(x) &= k^2 \Big{(} \frac{3}{2}\Big{(} \frac{P'(x)}{P(x)}\Big{)}^2-\frac{P''(x)}{P(x)}\Big{)}, \nonumber\\
f_5(x) &= -k^2 \frac{P'(x)}{P(x)},
\end{align*}
and the scalar equation
\begin{align}
&\hat{\rho}^{(4)} + \frac{2}{k^2}(s \mu + f_5)\hat{\rho}^{'''}+\frac{2}{k^2}(f_1 + f_2 s + f_4 -\lambda \mu) \hat{\rho}^{''}\nonumber \\ 
&+\frac{2}{k^2}(f_1'+f_2' s-\lambda f_2+f_3-s \lambda) \hat{\rho}^{'}
+\frac{2}{k^2}(\lambda f_2'+\lambda^2)\hat{\rho}=0. \label{scalar_ODE_nonlinear_dispersion}
\end{align}
In the proof of the following lemma we rescale the variable $y$.
\begin{lemma}
If $\lambda \neq 0$, the eigenvalue equations \eqref{eigenvalue_linear_dispersion_1}-\eqref{eigenvalue_linear_dispersion_2},  \eqref{eigenvalue_equation_1}-\eqref{eigenvalue_equation_2} are  equivalent to \eqref{scalar_ODE} and  \eqref{scalar_ODE_nonlinear_dispersion}, respectively. In particular, if $\lambda \neq 0$ is not an eigenvalue of \eqref{scalar_ODE}(\eqref{scalar_ODE_nonlinear_dispersion}), it is also not an eigenvalue of \eqref{eigenvalue_linear_dispersion_1}-\eqref{eigenvalue_linear_dispersion_2}(\eqref{eigenvalue_equation_1}-\eqref{eigenvalue_equation_2}). The Evans function for \eqref{scalar_ODE}(\eqref{scalar_ODE_nonlinear_dispersion})  does not vanish for $\Re(\lambda)\geq 0$ and $|\lambda|$ large enough.
\end{lemma}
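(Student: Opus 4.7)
The plan is to handle the two assertions separately: first the equivalence between each first-order eigenvalue system and its scalar reformulation, then the asymptotic non-vanishing of the Evans function.

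For the equivalence, I would exploit that for $\lambda\neq 0$ one can pass to integrated variables $\hat\rho,\hat J$ (as already done in \eqref{sys_integrated_variables}-\eqref{sys_integrated_variables1}), obtain $\hat J'=s\hat\rho'-\lambda\hat\rho$ from the first equation, and then substitute the expressions for $\hat J'$, $\hat J''$, $\hat J'''$ into the derivative of the second equation. Collecting coefficients yields \eqref{scalar_ODE}; the same elimination applied to \eqref{eigenvalue_equation_1}-\eqref{eigenvalue_equation_2} produces \eqref{scalar_ODE_nonlinear_dispersion}, the additional terms involving $f_3,f_4,f_5$ arising simply by distributing the derivatives in $L_V\tilde\rho$. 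Conversely, given a decaying solution $\hat\rho$ of the scalar equation, one defines $\hat J$ by integrating $\hat J'=s\hat\rho'-\lambda\hat\rho$ from $-\infty$: the first equation is then built in, and the residual in the second equation has zero derivative (because $\hat\rho$ solves the derived scalar ODE) and vanishes at $-\infty$, so it is identically zero. This gives the equivalence in the class of solutions decaying at infinity, which is precisely the class relevant to the point spectrum.

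For the non-vanishing of the Evans function on $\Re\lambda\geq 0$ for $|\lambda|$ large, I would implement the rescaling $y=\eta/\sqrt{\lambda}$ (principal branch of $\sqrt{\cdot}$ on the right half-plane) alluded to in the statement. Each $\partial_y$ then picks up a factor $\sqrt{\lambda}$, so that the terms $\hat\rho^{(4)}$, $\lambda\mu\hat\rho''$ and $\lambda^2\hat\rho$ in \eqref{scalar_ODE} are all of order $\lambda^2$, while every remaining term (including those generated by the coefficients $f_3,f_4,f_5$ in \eqref{scalar_ODE_nonlinear_dispersion}, which are uniformly bounded because $P$ stays in a compact subset of $(0,\infty)$ along the profile) is of strictly lower order in $|\lambda|$. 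Dividing by $\lambda^2$, the rescaled equation is a perturbation, vanishing as $|\lambda|\to\infty$, of the constant-coefficient ODE
\[
\partial_\eta^4\hat\rho-\frac{2\mu}{k^2}\,\partial_\eta^2\hat\rho+\frac{2}{k^2}\,\hat\rho=0,
\]
whose characteristic polynomial $\tilde\nu^4-(2\mu/k^2)\tilde\nu^2+2/k^2$ has, whenever $\mu^2\neq 2k^2$ (the same generic condition used in Lemma \ref{lemma_essential_spectrum_profile}), four simple roots distributed two by two between the open half-planes.

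The last step is to pass from the limit equation to the perturbed one. A gap-lemma / exponential dichotomy argument in the spirit of \cite{Sandstede} shows that the decaying bases at $\pm\infty$ depend continuously on the small parameter $|\lambda|^{-1/2}$ and converge, as $|\lambda|\to\infty$ in $\Re\lambda\geq 0$, to the decaying subspaces of the constant-coefficient model; the Evans function evaluated at the matching point $\eta=0$ then tends to the Wronskian of four linearly independent exponentials of the limit equation, which is non-zero. Hence $E(\lambda)\neq 0$ on $\Re\lambda\geq 0$ for $|\lambda|$ sufficiently large. The main obstacle I expect is making this convergence uniform in $\arg\lambda\in[-\pi/2,\pi/2]$: combined with Lemma \ref{lemma_estimate_real_part_profile} (which forces $|\Im\lambda|\to\infty$ along any unbounded sequence in the closed right half-plane), this requires tracking the error in the dichotomy lemma explicitly along rotating directions of $\sqrt{\lambda}$, so as to extract an effective threshold $|\lambda|\geq R$ rather than a purely qualitative asymptotic statement.
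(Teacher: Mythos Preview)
Your reduction to the scalar equations \eqref{scalar_ODE} and \eqref{scalar_ODE_nonlinear_dispersion} matches the paper: pass to the integrated variable $\hat\rho$, use the first equation to write $\tilde J=s\hat\rho'-\lambda\hat\rho$, and substitute into the second. The converse you sketch is not written out in the paper but is implicit in the claimed equivalence.

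For the large-$|\lambda|$ step you follow the same template (rescale, pass to a constant-coefficient limit, invoke a Sandstede-type roughness argument), but your choice of rescaling factor creates a gap. You scale by the \emph{complex} number $\sqrt{\lambda}$ and land on the $\lambda$-free limit $\partial_\eta^4\hat\rho-(2\mu/k^2)\partial_\eta^2\hat\rho+(2/k^2)\hat\rho=0$; the paper instead scales by the \emph{real} number $|\lambda|^{1/2}$ and keeps the direction $\tilde\lambda=\lambda/|\lambda|$ as a parameter, so the limit characteristic polynomial is $z^4-(2\mu/k^2)\tilde\lambda\, z^2+(2/k^2)\tilde\lambda^2$. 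The two are related by $z=\tilde\nu\sqrt{\tilde\lambda}$. The splitting that governs the Evans function is $\Re\nu\gtrless 0$ in the \emph{original} variable, i.e.\ $\Re z\gtrless 0$; your assertion that the $\tilde\nu$ split two by two in $\Re\tilde\nu$ is correct but is only the case $\tilde\lambda=1$. For $\tilde\lambda=\pm i$ on the boundary of $\{\Re\lambda\ge 0\}$ the relevant half-planes are rotated by $\pm\pi/4$, and one must verify that no root reaches the imaginary axis---this fails exactly when $\mu=0$ and is precisely the argument-tracking the paper carries out for $w_{1,2}$ over $\theta\in[0,\pi/2]\cup[3\pi/2,2\pi)$. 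A second issue with the complex rescaling: with $y$ real and $\eta=y\sqrt{\lambda}$, the variable $\eta$ runs along a ray in $\mathbb{C}$, so the coefficients $f_j(\eta/\sqrt\lambda)$ would require evaluating the profile off the real axis, which is not available. The paper's real rescaling avoids both problems at the price of carrying the one-parameter family of limit equations. Your closing remark about uniformity in $\arg\lambda$ anticipates the perturbative side of the difficulty but does not address the splitting of the limit itself.
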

\begin{proof}
Suppose that $\lambda \neq 0$. Integrating \eqref{eigenvalue_linear_dispersion_1}-\eqref{eigenvalue_linear_dispersion_2} we get
\begin{equation*}
\int \tilde{\rho} dx=0,\mbox{ }\int \tilde{J} dx = 0.
\end{equation*}
We shall use the integrated variable
\begin{equation*}
\hat{\rho}(x)=\int_{-\infty}^x \tilde{\rho}(y)dy.
\end{equation*}
We have that $\tilde{\rho}(x)$ decays exponentially as $|x|\rightarrow +\infty$. In particular \\$|\tilde{\rho}(x)
| \leq C_1 \exp(-C_2 x)$. So
\begin{align*}
|\hat{\rho}(x)|&=|\int_{x}^\infty \tilde{\rho}(y)dy| \leq \int_{x}^\infty |\tilde{\rho}(y)|dy \leq C_1 \int_{x}^\infty \exp(-C_2 y) dy \\ &=\frac{C_1}{C_2}\exp(-C_2 x).
\end{align*}
Therefore $\hat{\rho}(x)$ decays exponentially as $x \rightarrow +\infty$. Similarly $\hat{\rho}(x)$ decays exponentially as $x \rightarrow -\infty$.\\
\\
Integrating equation \eqref{eigenvalue_linear_dispersion_1} from $-\infty$ to $x$ yields
\begin{equation}
\label{expr_J}
\tilde{J} = s \hat{\rho}'-\lambda \hat{\rho}.
\end{equation}
After expressing $\tilde{\rho}$ in terms of $\hat{\rho}$, substituting $\tilde{J}$ from \eqref{expr_J} and dividing by $k^2/2$ we get \eqref{scalar_ODE}. If $\lambda \neq 0$  is not an eigenvalue of \eqref{scalar_ODE}, it is also not an eigenvalue of \eqref{eigenvalue_linear_dispersion_1}-\eqref{eigenvalue_linear_dispersion_2}.\\
\\
Now, we make a change of variable
\begin{equation}
\label{chnage_var}
x = \frac{y}{|\lambda|^\frac{1}{2}}.
\end{equation}
Dividing \eqref{scalar_ODE} by $|\lambda|^2$ yields
\begin{align}
&\frac{d^4 \hat{\rho}}{dy^4}+\frac{2 s \mu}{k^2 |\lambda|^\frac{1}{2}}\frac{d^3 \hat{\rho}}{dy^3}
+ \frac{2}{k^2}\Bigg{(}\frac{f_1\Big{(}\frac{y}{|\lambda|^\frac{1}{2}}\Big{)} + f_2\Big{(}\frac{y}{|\lambda|^\frac{1}{2}}\Big{)} s}{|\lambda|} - \frac{\lambda}{|\lambda|} \mu\Bigg{)}\frac{d^2 \hat{\rho}}{dy^2} \nonumber\\
&+\frac{2}{k^2 |\lambda|^\frac{3}{2}}\Big{(}f_1'\Big{(}\frac{y}{|\lambda|^\frac{1}{2}}\Big{)}+s f_2'\Big{(}\frac{y}{|\lambda|^\frac{1}{2}}\Big{)}-\lambda f_2\Big{(}\frac{y}{|\lambda|^\frac{1}{2}}\Big{)} - s \lambda\Big{)}\frac{d \hat{\rho}}{dy} \nonumber\\
&+\frac{2}{k^2}\Bigg{(}-\frac{\lambda}{|\lambda|^2} f_2'\Big{(}\frac{y}{|\lambda|^\frac{1}{2}}\Big{)}+\frac{\lambda^2}{|\lambda|^2}\Bigg{)}\hat{\rho}=0
\label{rescaled_system}
\end{align}
Denote $\tilde{\lambda} = \frac{\lambda}{|\lambda|}$. Taking $\lim_{|\lambda|\rightarrow +\infty}$ in \eqref{rescaled_system} yields:
\begin{equation}
\label{constant_coeff_equation}
\frac{d^4 \hat{\rho}}{dy^4} - \frac{2 \mu}{k^2}\tilde{\lambda} \frac{d^2 \hat{\rho}}{dy^2} +\frac{2}{k^2}\tilde{\lambda}^2 \hat{\rho}=0.
\end{equation}
Rewriting \eqref{constant_coeff_equation} as a first-order system gives:
\begin{equation}
\label{first_order_constant_coefficient}
\frac{d}{dy}\begin{bmatrix}
\rho\\u_1\\u_2\\u_3
\end{bmatrix}
 = \begin{bmatrix}
0 & 1 & 0 & 0\\
0 & 0 & 1 & 0\\
0 & 0 & 0 & 1\\
-\frac{2}{k^2}\tilde{\lambda} & 0 & \frac{2 \mu}{k^2} \tilde{\lambda} & 0
\end{bmatrix}
\begin{bmatrix}
\rho\\u_1\\u_2\\u_3
\end{bmatrix}.
\end{equation}
The characteristic equation of \eqref{first_order_constant_coefficient} is
\begin{equation}
\label{char_eq_1}
z^4 - \frac{2 \mu}{k^2}\tilde{\lambda} z^2 + \frac{2 \tilde{\lambda}^2}{k^2}=0.
\end{equation}
Let $\Re(\lambda) \geq 0$. If 
\begin{equation*}
D = \frac{4}{k^2}\big{(} \frac{\mu^2}{k^2}-2)\tilde{\lambda}^2 \neq 0,
\end{equation*}
that is $\frac{\mu^2}{k^2} \neq 2$, then \eqref{char_eq_1} has 4 distinct roots, with $\Re(z_1),\Re(z_2)<0$ and \\$\Re(z_3),\Re(z_4)>0$.\\
The condition $D \neq 0$ holds, when the dispersion and dissipation  terms do not exactly balance.\\
We make the change of variable $w=z^2$. Then \eqref{char_eq_1} becomes:
\begin{equation}
\label{quadratic_char_eq}
w^2 - \frac{2 \mu}{k^2}\tilde{\lambda} w + \frac{2 \tilde{\lambda}^2}{k^2}=0
\end{equation}
If $\Re(\lambda) \geq 0$, then $\Re(\tilde{\lambda}) \geq 0$. If $D \neq 0$, since $\frac{2 \tilde{\lambda}^2}{k^2} \neq 0$, the equation \eqref{quadratic_char_eq} has two distinct nonzero roots. Suppose $\frac{\mu^2}{k^2} < 2$, that is the dispersion is dominating. The roots are:
\begin{equation*}
w_{1,2} = \frac{\mu \tilde{\lambda}}{k^2} \pm \frac{i}{k}\sqrt{2-\frac{\mu^2}{k^2}}\tilde{\lambda}.
\end{equation*}
Moreover $|w_j| = \sqrt{2}/k$, $j=1,2$. Let $\lambda = \exp(i \theta)$, and $\theta \in [0, \pi/2] \cup [3 \pi/2, 2 \pi[$. We have
\begin{equation*}
Arg\Big{(}\frac{\mu}{k^2} + \frac{i}{k}\sqrt{2-\frac{\mu^2}{k^2}}\Big{)} \in ]0, \pi/2[.
\end{equation*}
If $\theta \in [0, \pi/2]$, then $Arg(w_1) \in ]0, \pi[$. If $\theta \in [3 \pi/2, 2 \pi[$, then $\Re(w_1) > 0$. Also
\begin{equation*}
Arg\Big{(}\frac{\mu}{k^2} - \frac{i}{k}\sqrt{2-\frac{\mu^2}{k^2}}\Big{)} \in ]3 \pi/2, 2 \pi[.
\end{equation*} 
If $\theta \in [0, \pi/2]$, then $\Re(w_2) > 0$. If $\theta \in [3 \pi/2, 2 \pi[$, then $Arg(w_1) \in ]\pi, 2 \pi[$. So $w_{1,2}$ are not negative. Hence $z^2 = w_k$ has one solution with positive and one with negative real part for $k=1,2$. Therefore $z_j$ are not purely imaginary. Also $z_j$ are distinct, because distinct nonzero numbers cannot have equal square roots.\\
The equation \eqref{constant_coeff_equation} has consant coefficients, so its Evans function can be computed.
Let $z_k$  be a simple eigenvalue of the matrix
\begin{equation}
\label{matrix_first_order_constant_coefficient}
 \begin{bmatrix}
0 & 1 & 0 & 0\\
0 & 0 & 1 & 0\\
0 & 0 & 0 & 1\\
-\frac{2}{k^2}\tilde{\lambda} & 0 & \frac{2 \mu}{k^2} \tilde{\lambda} & 0
\end{bmatrix}.
\end{equation}
The associated eigenvector is $v_k = [1,z_k,z_k^2,z_k^3]^T$. The Evans function for \eqref{first_order_constant_coefficient} is 
\begin{align*}
&\tilde{E}(\lambda) = \det([v_1, v_2,v_3,v_4]) = \prod_{j<k}(z_j-z_k) \nonumber\\
&=(z_1-z_2)(z_1-z_3)(z_1-z_4)(z_2-z_3)(z_2-z_4)(z_3-z_4) \neq 0. 
\end{align*}
We have that $\tilde{E}(\lambda) \neq 0$, since the eigevalues $z_k$ are distinct. The coefficients of \eqref{constant_coeff_equation} and \eqref{rescaled_system} are uniformly in $y$ close to each other, their Evans functions are uniformly close in $\lambda$ as a consequence of Theorem 3.1, \cite{Sandstede}. Therefore the Evans function for \eqref{scalar_ODE} never vanishes for $\Re(\lambda)\geq 0$, and $|\lambda|>C$, where $C$ is some constant. So for any eigenvalue with $\Re(\lambda)\geq 0$, we have $|\lambda| \leq C$.\\
\\
If the viscosity $\mu=0$, choosing $\tilde{\lambda}=i$ yields purely imaginary roots of \eqref{char_eq_1}. Therefore the matrix \eqref{matrix_first_order_constant_coefficient} is not hyperbolic.\\
We rewrite \eqref{eq_variable_coeff} as \eqref{eigenvalue_equation_1}
Expressing $\tilde{\rho}$ in terms of $\hat{\rho}$, which decays exponentially as $|x| \rightarrow + \infty$, integrating the first equation of \eqref{eigenvalue_equation_1} and substituting $\tilde{J}$ in the second equation of \eqref{eigenvalue_equation_1} we get \eqref{scalar_ODE_nonlinear_dispersion}.
Making the change of variable \eqref{chnage_var} in \eqref{scalar_ODE_nonlinear_dispersion} yields
\begin{align}
&\frac{d^4 \hat{\rho}}{dy^4}+\frac{2}{k^2}\frac{s \mu + f_5}{|\lambda|^\frac{1}{2}}\frac{d^3 \hat{\rho}}{dy^3}+\frac{2}{k^2}\Big{(}\frac{f_1 + f_2 s + f_4}{|\lambda|} -\mu \frac{\lambda}{|\lambda|}\Big{)}\frac{d^2 \hat{\rho}}{dy^2} \nonumber\\
&+\frac{2}{k^2}\frac{f_1'+f_2' s-\lambda f_2+f_3-s \lambda}{|\lambda|^\frac{3}{2}}\frac{d \hat{\rho}}{dy}
+\frac{2}{k^2}\Big{(} \frac{\lambda f_2'}{|\lambda|^2} +\frac{\lambda^2}{|\lambda|^2}\Big{)}\hat{\rho}=0,\label{rescaled_equation_nonlinear_dispresion}
\end{align}
where all the functions $f_k$ and their derivatives are evaluated at $y/|\lambda|^\frac{1}{2}$. Taking limit as $|\lambda| \rightarrow +\infty$ in \eqref{rescaled_equation_nonlinear_dispresion} we obtain \eqref{constant_coeff_equation}. Therefore the same conclusion follows. \end{proof}
\subsubsection{Estimate for the maximum of $|\lambda|$}
We will estimate the constant $C$, such that each eigenvalue of \eqref{eq_variable_coeff} with non-negative real part has absolute value less than $C$. In preparation to stating Lemma \ref{upper_bound_lambda}, let
\begin{align*}
\theta_{1,2} &= arg\Big(\frac{\mu}{2}\pm i \Big{(} 1 - \frac{\mu^2}{4}\Big{)}^\frac{1}{2}\Big), \nonumber\\
\alpha &= \Re(exp(i(\theta+\theta_1)/2))\mbox{ for }\theta=\pi/2, \nonumber\\
z_{1,3} &= \mp \exp(i(\theta+\theta_1)/2),\mbox{ }z_{2,4} = \mp \exp(i(\theta+\theta_2)/2).
\end{align*}
The distance between the roots $|z_j-z_k|$ does not depend on $\theta$ and we can compute it e.g. for $\theta = 0$.\\
Also
\begin{align}
m_1(\lambda) &= |\lambda|^{-1}\sup_{x\geq 0}|f_2'|, \nonumber\\
m_2(\lambda) &= |\lambda|^{-\frac{3}{2}}\sup_{x\geq 0}|f_1'+s f_2'+f_3| + |\lambda|^{-\frac{1}{2}}\sup_{x\geq 0}|f_2+s|, \nonumber\\
m_3(\lambda) &= |\lambda|^{-1} \sup_{x\geq 0}|f_1+s f_2 + f_4|, \nonumber\\
m_4(\lambda) &= |\lambda|^{-\frac{1}{2}}\sup_{x \geq 0}|s \mu + f_5|.
\label{bounds}
\end{align}
Moreover
\begin{align}
r_{j,k} &= \frac{p_k}{g_j}, p_k = \sum_{l=1}^4 m_l(\lambda)|z_l|^{l-1}=\sum_{l=1}^4 m_l(\lambda), \nonumber\\
g_1 &= |z_1-z_2||z_1-z_3||z_1-z_4|, g_2 = |z_1-z_2||z_2-z_3||z_2-z_4|, \nonumber\\
g_3 &= |z_1-z_3||z_2-z_3||z_3-z_4|, g_4 = |z_1-z_4||z_2-z_4||z_3-z_4|, \nonumber\\
R_{+} &= [r_{j,k}].
\label{residual_matrix}
\end{align}
The matrix $R_{-}$ has the suprema in the definition of $m_k(\lambda)$ taken for $x \leq 0$.
\begin{eqnarray*}
\delta_{\pm} = \Vert R_{\pm} \Vert_F,\mbox{ }
\epsilon_{\pm} = 4 \alpha^{-1} \delta_{\pm}.
\end{eqnarray*}
In the following lemma we decompose the system into a constant coefficients part, which depends only on the direction $\lambda/|\lambda|$, and a perturbation, which becomes small for large $|\lambda|$. We use exponential dichotomies to estimate the difference between the Evans functions of the constant coefficient system and the perturbed system. We suppose for simplicity that $k=\sqrt{2}$.
\begin{lemma}\label{upper_bound_lambda}
Suppose $\mu < 2$. Let $\sqrt{2}\sqrt{\epsilon_{+}^2+\epsilon_{-}^2}<1$, and $\delta_{\pm}<\frac{\alpha}{4}$ for some $|\lambda|=C$. Then for all $\lambda$ with $\Re(\lambda)\geq 0$ and $|\lambda|\geq C$, the Evans function $E(\lambda)$ for \eqref{scalar_ODE_nonlinear_dispersion} has no zeroes.
\end{lemma}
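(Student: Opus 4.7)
The plan is to follow the rescaling $x = y/|\lambda|^{1/2}$ of the previous lemma and carry out the comparison with the constant coefficient problem \emph{quantitatively}. After this change of variable the scalar equation \eqref{scalar_ODE_nonlinear_dispersion}, written as a first order system in $\mathbb{C}^4$, takes the form
\begin{equation*}
Y' = A_0(\tilde\lambda)\,Y + R(y,\lambda)\,Y,\qquad \tilde\lambda = \lambda/|\lambda|,
\end{equation*}
where $A_0(\tilde\lambda)$ is the companion matrix \eqref{matrix_first_order_constant_coefficient} and $R$ has nonzero entries only in its last row, bounded in absolute value by the quantities $m_1,\dots,m_4$ of \eqref{bounds}. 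Since $\mu<2$ and $k=\sqrt 2$, the analysis of the previous lemma gives, uniformly for $\Re(\tilde\lambda)\geq 0$, four distinct simple eigenvalues $z_1,z_2$ with negative real parts and $z_3,z_4$ with positive real parts, bounded away from the imaginary axis by $\alpha$, with eigenvectors $v_j = (1,z_j,z_j^2,z_j^3)^T$.

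The first step is to record that the Evans function of the unperturbed system is the Vandermonde $\tilde E(\lambda) = \prod_{j<k}(z_j-z_k)\neq 0$; the numbers $g_j$ of \eqref{residual_matrix} are precisely the norms controlling the spectral projectors onto $\mathbb{C}v_j$. The second step is to construct, by a Duhamel fixed point argument, the true solutions $Y_j^{\pm}$ that enter the definition of $E(\lambda)$. For $j=1,2$ one solves the integral equation
\begin{equation*}
Y_j^{-}(y) = e^{z_j y} v_j + \int_{-\infty}^{y} e^{A_0(y-s)}\,R(s,\lambda)\,Y_j^{-}(s)\,ds
\end{equation*}
in the weighted sup norm $\Vert Y\Vert_j = \sup_{y\leq 0} e^{-\Re(z_j)y}|Y(y)|$; expanding $e^{A_0(y-s)}$ in the eigenbasis produces, row by row, the entries $r_{j,k} = p_k/g_j$ of $R_{-}$. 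The symmetric construction on $y\geq 0$ furnishes $Y_j^{+}$ for $j=3,4$ with entries read off from $R_{+}$. The hypothesis $\delta_{\pm}<\alpha/4$ ensures that the iteration is a contraction and produces perturbed solutions with $Y_j^{\pm}(0) = v_j + e_j^{\pm}$, where the coordinates of $e_j^{\pm}$ in the basis $\{v_1,\dots,v_4\}$ are bounded componentwise by the corresponding row of $R_{\pm}$ (the factor $4\alpha^{-1}$ in $\epsilon_{\pm}$ arising from the contraction geometry).

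The third step compares the Evans functions. By multilinearity of the determinant,
\begin{equation*}
\frac{E(\lambda)}{\tilde E(\lambda)} = \det\bigl(I+M(\lambda)\bigr),
\end{equation*}
where the columns of $M$ are the vectors $e_j^{\pm}$ expressed in the basis $\{v_j\}$. The first two columns are controlled by $\epsilon_{-}$ and the last two by $\epsilon_{+}$, so that $\Vert M\Vert_F\leq \sqrt{2}\sqrt{\epsilon_+^2+\epsilon_-^2}$. A Hadamard type inequality then gives $|\det(I+M)-1|<1$ under the assumed smallness $\sqrt{2}\sqrt{\epsilon_+^2+\epsilon_-^2}<1$, hence $E(\lambda)\neq 0$. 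To promote the conclusion from the single level $|\lambda|=C$ to the whole half-annulus $\{\Re(\lambda)\geq 0,\,|\lambda|\geq C\}$, observe that each $m_l(\lambda)$ is a strictly negative power of $|\lambda|$ times a quantity independent of $\lambda$, so $\delta_{\pm}$ and $\epsilon_{\pm}$ are monotone decreasing in $|\lambda|$, and the smallness at $|\lambda|=C$ automatically persists for all larger moduli.

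The main obstacle is the bookkeeping in the second step: closing the Duhamel contraction in the correctly weighted norm requires the decomposition of the propagator $e^{A_0(y-s)}$ along the spectral projectors to yield precisely the ratios $p_k/g_j$, and the dichotomy rate entering the Young inequalities must be exactly $\alpha$. Once these constants are matched to the definitions in \eqref{bounds}--\eqref{residual_matrix}, the passage from the column bounds to the determinantal inequality for $\det(I+M)$ is standard, and the conclusion on zeros of $E(\lambda)$ follows.
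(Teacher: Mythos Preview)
Your strategy is correct and reaches the same endpoint as the paper: after the $|\lambda|^{1/2}$ rescaling and passage to the eigenbasis of $A_0(\tilde\lambda)$, the Evans function factors as $\tilde E(\lambda)\det(I+M)$ with $\|M\|_F\le\sqrt2\sqrt{\epsilon_+^2+\epsilon_-^2}$, and the smallness hypothesis together with the monotonicity of the $m_l$ in $|\lambda|$ finishes the argument. The route you take in the middle step, however, differs from the paper's. You propose to construct the solutions $Y_j^{\pm}$ by a direct Duhamel fixed point, whereas the paper first diagonalises via $u=Sv$ (so that the eigenvectors become the standard basis, the dichotomy constant becomes $K=1$, and the perturbation becomes $S^{-1}BS$, whose Frobenius norm is exactly $\|R_\pm\|_F=\delta_\pm$), then invokes Coppel's roughness--of--dichotomies theorem to obtain a perturbed projection $Q$ with $\|P-Q\|\le 4\alpha^{-1}K^3\delta_\pm=4\alpha^{-1}\delta_\pm=\epsilon_\pm$, and finally uses the Bauer--Fike theorem rather than a Hadamard--type bound to certify $\det(I+H)\neq0$. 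The two arguments are essentially interchangeable---Coppel's theorem is itself proved by the fixed--point scheme you sketch---but the paper's phrasing makes the constants $\epsilon_\pm$ fall out of a cited inequality with no bookkeeping left to the reader, while your version is more self--contained.

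Two technical caveats on your write--up. First, the integral equation you display for $Y_j^{-}$, with the full propagator $e^{A_0(y-s)}$ integrated from $-\infty$ to $y$, diverges because $A_0$ has eigenvalues of both signs; it must be replaced by the split version (unstable projection integrated from $-\infty$, stable projection integrated backward from a fixed point) that you allude to later under ``decomposition along spectral projectors''. Second, the indexing is reversed: the solutions entering the Evans function from $-\infty$ are those associated with the \emph{unstable} eigenvalues $z_3,z_4$ (so that $e^{z_j y}\to0$ as $y\to-\infty$), not $z_1,z_2$. Neither point affects the soundness of the strategy, but both need fixing before the contraction estimate can close.
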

\begin{proof}
Consider the system
\begin{equation}
\label{system_constant_coefficients1}
\frac{du}{dx}=A(\tilde{\lambda})u
\end{equation}
where $\tilde{\lambda} = \frac{\lambda}{|\lambda|}$. The matrix $A(\tilde{\lambda})$ does not depend on $x$ and has simple eigenvalues $z_j$, $j=1,...,4$, with $\Re(z_1), \Re(z_2) < 0$ and $\Re(z_3), \Re(z_4) > 0$. We may fix $\tilde{\lambda}$. The system \eqref{system_constant_coefficients1} has an exponential dichotomy (see \cite{Coppel}, Chapter 4) on $R_{0}^+$. That is there are constants $K$, $\alpha$ and projection $P$ such that
\begin{align*}
\|X(t)PX^{-1}(s)\|_2 \leq K e^{-\alpha (t-s)}, t \geq s &\geq 0,\\
\|X(t)(Id-P)X^{-1}(s)\|_2 \leq K e^{-\alpha (s-t)}, s \geq t &\geq 0,
\end{align*}
where $X(t)$ is the fundamental solutions matrix for \eqref{system_constant_coefficients1} with $X(0)=Id$. We consider the scalar product $\langle x,y \rangle= x \cdot \bar{y}$. The vector norm is $|x| = \sqrt{\langle x, x \rangle}$. The 2-norm is $\|A\|_2 = \sup_{|x|=1}|A x|$. Since the matrix $A(\tilde{\lambda})$ has constant coefficients, the constants $K$ and $\alpha$ can be computed.\\
Now, consider the perturbed system
\begin{equation}
\label{perturbed_system}
\frac{du}{dx}=A(\tilde{\lambda})u + B(x,\lambda) u.
\end{equation}
We have
\begin{equation*}
\lim_{| \lambda | \rightarrow +\infty}\sup_{x \geq 0} \|B(x,\lambda)\|_2 = 0.
\end{equation*}
More precisely, $\sup_{x \geq 0} \|B(x,\lambda)\|_2 = \mathcal{O}(|\lambda|^{-\frac{1}{2}})$. Let $\delta = \sup_{x \geq 0} \|B(x,\lambda)\|$. If $\delta < \alpha/(4 K^2)$, then the perturbed system \eqref{perturbed_system} also has an exponential dichotomy with projection $Q$. Moreover (see \cite{Coppel}, Chapter 4, Prop.\ 1)
\begin{equation*}
\|P-Q\|_2 \leq 4 \alpha^{-1} K^3 \delta.
\end{equation*}
Let $P$ and $Q$ be projections onto the subspaces $M$ and $N$. There exist unique orthogonal projections $\tilde{P}$ and $\tilde{Q}$ onto $M$ and $N$. Moreover (see \cite{Kato} p.58, Theorem 6.35)
\begin{equation*}
\| \tilde{P}-\tilde{Q} \|_2 \leq \| P - Q \|_2.
\end{equation*}
We have $\dim M = \dim N = 2$. Denote $\epsilon = 4 \alpha^{-1} K^3 \delta$. Then $\| \tilde{P}-\tilde{Q} \|_2 \leq \epsilon$. Let $u_k$, $k=1,...,4$ be the eigenvectors of $A(\tilde{\lambda})$. We suppose they are normalized, that is $|u_k| = 1$. We have $u_k \in M$, that is $\tilde{P} u_k = u_k$. Denote $h_k = u_k - \tilde{Q} u_k$. Then
\begin{equation*}
|h_k| =|u_k - \tilde{Q} u_k | = | \tilde{P} u_k - \tilde{Q} u_k | \leq \| \tilde{P} - \tilde{Q}\|_2 |u_k | \leq \epsilon.
\end{equation*}
Let $v_k = \tilde{Q} u_k$. We have $\langle v_k, h_k \rangle = \langle \tilde{Q} u_k, u_k - \tilde{Q} u_k \rangle = 0$. Hence $|u_k|^2 = |v_k|^2 + |h_k|^2$. Therefore $|v_k|^2 \geq |u_k|^2 - \epsilon^2 = 1-\epsilon^2$. There is an $\epsilon_0$, such that $\forall \epsilon < \epsilon_0$, $v_k \neq 0$. Also $|\langle v_1,  v_2 \rangle| \leq |\langle u_1, u_2 \rangle| + 2 \epsilon + \epsilon^2$ and $|v_1||v_2| \geq 1- \epsilon^2$. So if $1-\epsilon^2 > |\langle u_1, u_2 \rangle| + 2 \epsilon + \epsilon^2$, then $|\langle v_1,  v_2 \rangle| < |v_1| |v_2|$. There is an $\epsilon_0$, such that if $\epsilon < \epsilon_0$, this inequality holds. We showed that $\{v_1,v_2\}$ is a basis of $N$, and $N = span(\{v_1,v_2\})$.\\
We do the same computation for $\mathbb{R}_0^-$, and obtain the vectors $v_3$ and $v_4$.\\
Let $E(\lambda)$ be the Evans function for \eqref{system_constant_coefficients1}. Then $E(\lambda) = \det([u_1,u_2,u_3,u_4])$. Let $E_p(\lambda)$ be the Evans function for \eqref{perturbed_system}. Then $E_p(\lambda) = \det([v_1,v_2,v_3,v_4])$. Denote $H = [h_1,...,h_4]$, $U = [u_1,...,u_4]$. Let $\|A\|_F = \sqrt{\sum_{j,k} |a_{j,k}|^2}$. The inequality $\|A\|_2 \leq \|A\|_F$ holds. 
Suppose $U$ is the identity matrix. Let $q$ be an eigenvalue of $U+H$. The Bauer-Fike theorem implies that
\begin{equation*}
|1-q|\leq\|H\|_2.
\end{equation*}
Since $\|H\|_2 \leq \|H\|_F$, $\|H\|_F<1$ shows that $0$ cannot be an eigenvalue of $U+H$, hence $det(U+H)\neq 0$. Therefore $E_p(\lambda)\neq 0$.\\
Let $s_j$, $j=1,...,4$ be the eigenvectors of $A(\tilde{\lambda})$ and $S = [s_1,s_2,s_3,s_4]$. Let $D(\tilde{\lambda}) = diag(z_1,z_2,z_3,z_4) = S^{-1} A(\tilde{\lambda})S $. We make the change of variable $u = S v$. Then \eqref{perturbed_system} becomes
\begin{equation*}
\frac{dv}{dx} = D(\tilde{\lambda})v + S^{-1}B(x,\lambda)S v.
\end{equation*}
Since $D(\tilde{\lambda})$ is diagonal its eigenvectors are $u_j = e_j$, the standard basis vectors, for $j=1,...,4$. Hence $U = Id$, $\det(U) = \|U\|_2=\|U^{-1}\|_2=1$, and $\kappa = 1$. Moreover $P = diag(1,1,0,0)$ and $K = 1$.\\
We suppose for simplicity that $k=\sqrt{2}$, although all the calculations can be done for arbitrary value of $k$.
We have $0<\theta_1<\pi/2$, $-\pi/2 < \theta_2 < 0$ and $\theta_2 = - \theta_1$. Let $\tilde{\lambda} = \exp(i \theta)$. Then
\begin{equation}
\label{ineq_arg}
-\frac{\pi}{4}+\frac{\theta_j}{2} \leq \frac{\theta + \theta_j}{2}\leq \frac{\pi}{4}+\frac{\theta_j}{2},\mbox{ }j=1,2.
\end{equation}
The roots of \eqref{quadratic_char_eq} are $z_k$. We have $\alpha = \min_k \Re(z_k)$. From \eqref{ineq_arg} it follows that we can take $\alpha = \Re(exp(i(\theta+\theta_1)/2))$ for $\theta=\pi/2$. Since $\alpha$ depends on $\theta$, we can get a non-ciruclar region where the eigenvalues are contained by computing $\alpha$ for different $\theta$.\\
We can directly compute $S^{-1}B(x,\lambda)S$. Since $B(x,\lambda)$ has all elements except the last row equal to zero, that is $b_{4,k}$, we obtain 
\begin{eqnarray*}
\|S^{-1}B(x,\lambda)S\|_2 \leq \|S^{-1}B(x,\lambda)S\|_F \leq \|R\|_F
\end{eqnarray*}
and for the matrix  $R$ from \eqref{residual_matrix},
where $m_k(\lambda)$ are some upper bounds for $b_{4,k}(\lambda)$, $|b_{4,k}(x,\lambda)|\leq m_k(\lambda)$.
For equation \eqref{rescaled_equation_nonlinear_dispresion} we can choose $m_k(\lambda)$ from \eqref{bounds}.
Note that the matrix $R(\lambda)$ from \eqref{residual_matrix} has monotonically decreasing with $|\lambda|$ elements. We get two matrices $R_{\pm}$ corresponding to $\mathbb{R}_0^+$ and $\mathbb{R}_0^-$ respectively, and two values for $\delta_{\pm}$ and $\epsilon_{\pm}$. Moreover $\|H\|_F \leq \sqrt{2} \sqrt{\epsilon_+^2 + \epsilon_-^2}$. \end{proof}
For the set of parameters \eqref{parameters}, to which Lemma \ref{lemma_global_existence} applies, we can show numerically using Lemma \ref{upper_bound_lambda} that there are no eigenvalues with $\Re(\lambda)\geq 0$ and $|\lambda| \geq 1.9 \cdot 10^4$. By Evans function computations we can check numerically that there are no eigenvalues with $\Re(\lambda)\geq 0$ and $|\lambda| < 1.9 \cdot 10^4$ (see \cite{LMZ18_num}). This is numerical evidence of point spectrum stability.

\end{document}